\documentclass[a4paper,11pt,DIV=12]{scrartcl}

\usepackage[utf8]{inputenc}
\usepackage[T1]{fontenc}
\usepackage{lmodern}
\usepackage{amsmath,amssymb,amsthm}
\usepackage{graphicx}
\usepackage{float}
\usepackage[dvipsnames,svgnames]{xcolor}
\usepackage[colorlinks=true,linkcolor=RoyalBlue,citecolor=PineGreen,urlcolor=RoyalBlue]{hyperref}
\usepackage{tikz,pgf}
\usetikzlibrary{arrows.meta}
\usetikzlibrary{calc}
\usetikzlibrary{intersections}

\usepackage{array}
\usepackage{booktabs}
\usepackage{multirow}
\usepackage{hyperref}
\usepackage{caption}
\usepackage{subcaption}
\usepackage[nameinlink]{cleveref}
\usepackage{enumitem}

\newtheorem{theorem}{Theorem}[section]
\newtheorem{lemma}[theorem]{Lemma}
\newtheorem{proposition}[theorem]{Proposition}

\theoremstyle{definition}\newtheorem{definition}[theorem]{Definition}
\theoremstyle{definition}\newtheorem{remark}[theorem]{Remark}

\colorlet{colG}{DarkSeaGreen}
\definecolor{colR}{HTML}{CC6677}
\definecolor{colO}{HTML}{DDCC77}
\definecolor{colB}{HTML}{6699CC}
\colorlet{colY}{Gold!70!black}
\colorlet{colP}{Purple!60!white}
\colorlet{colX}{colR}

\newcommand{\prigid}{\textbf{p}-rigid}
\newcommand{\mrigid}{\textbf{m}-rigid}
\newcommand{\urigid}{\textbf{u}-rigid}

\newcommand{\blue}{\text{blue}}
\newcommand{\red}{\text{red}}

\DeclareRobustCommand{\asc}[1]{\tikz[scale=0.225,baseline={(0,0)}]{\draw[](0, 0)--(1, 0)--(1, 1)--(0, 1)--cycle;\draw[](0, 1)--(1, 0);}\,(#1)}
\newcommand{\sst}[2]{\left\lbrace #1\,\textbf{\upshape\textbar}\,#2 \right\rbrace}

\colorlet{colbg}{white}
\colorlet{colfg}{black}
\colorlet{colgraphv}{colfg!75!white}
\colorlet{colgraphe}{colfg!55!white}
\tikzstyle{vertex}=[fill=colgraphv,circle,inner sep=0pt, minimum size=4pt]
\tikzstyle{hvertex}=[vertex,colP,rectangle,rounded corners=1pt]
\tikzstyle{edge}=[line width=1.5pt,colgraphe]
\tikzstyle{labelsty}=[colgraphv,font=\scriptsize]

\newcommand{\iedge}[1]{\tikz[baseline={(0,0)}]{\draw[edge,#1] (0,0)--(0,0.25);}}
\newcommand{\ivertex}[1]{\tikz[baseline={(0,-0.125)}]{\node[hvertex,#1] at (0,0) {};}}

\pgfdeclarelayer{background}
\pgfsetlayers{background,main}

\begin{document}

\title{On rigidity properties of unit-distance graphs}
\author{%
    Sean Dewar\thanks{Numerical Analysis and Applied Mathematics (NUMA), KU Leuven} 
    \and 
    Georg Grasegger\thanks{Johann Radon Institute for Computational and Applied Mathematics, Austrian Academy of Sciences}
    \and
    Alison La Porta\thanks{Numerical Analysis and Applied Mathematics (NUMA), KU Leuven}
    \and
    Jan Legersk\'y\thanks{Faculty of Information Technology, Czech Technical University in Prague}
    \and
    Anthony Nixon\thanks{School of Mathematical Sciences, Lancaster University, UK}
}
\date{}

\maketitle

\begin{abstract}
    A \emph{unit-distance graph} is a graph which admits a realisation in Euclidean space in which every edge has unit length. Imposing further geometric conditions on the non-edges gives a family of natural subclasses. Requiring that no two vertices lie at distance less than one gives \emph{penny and marble graphs}: the contact graphs of collections of equal radii $d$-dimensional spheres with non-overlapping interiors for $d=2,3$. Requiring instead that the straight-line drawing in the plane be non-crossing gives \emph{matchstick graphs}. Since any motion of a realisation must preserve these extra conditions, the rigidity and flexibility properties of the resulting frameworks differ from those of classical bar-joint rigidity theory. In this note we analyse various rigidity problems for penny and marble graphs, matchstick graphs and unit-distance graphs. In particular we answer a recent open problem on penny and marble graph rigidity and establish a link between penny graphs and the concept of NAC-colourings.
\end{abstract}

\section{Introduction} 

Consider a collection of coins on the table in front of you. Can you change the shape of the collection without breaking the contacts between any pair of coins? To answer this we begin with the model of a \emph{bar-joint framework}: an ordered pair $(G,\rho)$ consisting of a graph $G=(V,E)$ and a realisation $\rho \colon V \to \mathbb{R}^d$ of its vertices into Euclidean $d$-space. The framework is \emph{rigid} if every edge-length-preserving continuous motion of $\rho$ arises from an isometry of $\mathbb{R}^d$, and \emph{globally rigid} if $\rho$ is the unique realisation of its edge lengths up to congruence. 

The rigidity and flexibility properties of such frameworks are well studied \cite{AsimowRoth1978,Connelly2005}
but that of non-generic frameworks less so. In this context it is very natural to consider frameworks with unit edge lengths, or even stronger, as we do in this article, to consider frameworks arising from collections of equal radii $d$-spheres with non-overlapping interiors (such as pennies in the plane). The contact graph of such a collection is a \emph{$d$-sphere graph}, and is called a \emph{penny graph} when $d=2$ and a \emph{marble graph} when $d=3$. The essential point is that the non-overlapping condition is a constraint on the \emph{non}-edges which any motion must also respect: a motion of a penny configuration may not push two disjoint pennies through one another, even though the corresponding bar-joint framework is free to do so. Rigidity within this class is thus a different notion, and one may ask the analogous question for any class of unit-distance realisations with conditions on the non-edges, for example \emph{matchstick graphs}, where the straight-line drawing is required to be non-crossing. 

These questions were investigated in \cite{DewarGraseggerEtAl2025}, where a  characterisation of penny-rigidity and global penny-rigidity was obtained for chordal graphs, along with examples separating the sphere-graph notions from their bar-joint counterparts in both directions.
Two questions left open in \cite{DewarGraseggerEtAl2025} concern instead the relationship at the level of individual graphs, namely whether there exists a generically globally rigid penny graph, or marble graph, which is penny- (respectively marble-) rigid but not globally penny- (respectively marble-) rigid. We answer both questions in \Cref{sec:figs}, in the affirmative, by direct construction; the relevant configurations are given in \Cref{fig:genGlobPennyNonGlob,fig:genGlobMarbleNonGlob}. 

We then turn to two structural results. The first, \Cref{prop:pennyouterplanar}, gives a complete characterisation of rigidity for matchstick graphs that are outerplanar. The second concerns NAC-colourings as introduced in~\cite{GraseggerLegerskySchicho2019}. In \Cref{thm:allFaces3or4cycles} we establish a link between penny-rigidity and the existence of NAC-colourings of a prescribed type.

\section{Rigidity theory preliminaries}
\label{sec:prelims}
We consider finite, simple graphs. For a framework $(G,\rho)$ we call $\rho$ a \emph{realisation}
of the \emph{underlying graph} $G$ in~$\mathbb{R}^d$.
The edges of $G$ are realised as straight line segments between the realisations of its end-vertices. We call the realisations of the vertices and edges of $G$ \emph{joints} and \emph{bars} of $(G,\rho)$ respectively. Realisations $\rho$ and $\rho'$ of $G$ are called \emph{equivalent}, respectively, \emph{congruent} if $\|\rho(u)-\rho(v)\|=\|\rho'(u)-\rho'(v)\|$ for all $uv\in E$, respectively, for all $u,v\in V$.

A \emph{flex} of $(G,\rho)$ is a continuous map $t \mapsto m_t$ from $[0,1]$ to the space
of realisations of $G$ in $\mathbb{R}^d$ such that $m_0(u)=\rho(u)$ for all $u\in V$ and
$m_t$ is equivalent but not congruent to $\rho$ for all $t\in(0,1]$.
If $(G,\rho)$ has no flex, then $(G,\rho)$ is \emph{rigid}.
Otherwise, it is \emph{flexible}.

An \emph{infinitesimal flex} of $(G,\rho)$ is a map $q:V\rightarrow\mathbb{R}^d$ such that
$\langle q(u)-q(v),\rho(u)-\rho(v)\rangle=0$
for all $uv\in E$, but it does not hold for some $u,v\in V$.
If~$(G,\rho)$ has an infinitesimal flex, then $(G,\rho)$ is \emph{infinitesimally flexible}.
Otherwise, $(G,\rho)$ is \emph{infinitesimally rigid}.

An important tool to study infinitesimal rigidity is the \emph{rigidity matrix} of $(G,\rho)$, i.e., an $|E|\times d|V|$ matrix $R(G,\rho)$ such that the edge $uv\in E$ has associated row
\begin{align*}
    \begin{array}{ccccc}
       & u & &  v & \\
    \Big( \dots & [\rho(u)-\rho(v)]^T & \dots & [\rho(v)-\rho(u)]^T &\dots\Big),
    \end{array}
\end{align*}
where all entries not shown are zero.
The kernel of $R(G,\rho)$ contains the infinitesimal flexes of $(G,\rho)$.
If $R(G,\rho)$ has full rank then $(G,\rho)$ is \emph{independent.}

A framework $(G,\rho)$ in $\mathbb{R}^d$ is \emph{globally rigid}
if any other framework $(G,\rho')$ in $\mathbb{R}^d$ with the same corresponding bar-lengths
is congruent to $(G,\rho)$.
If the coordinates of $\rho(V)$ are algebraically independent over $\mathbb{Q}$, then $(G,\rho)$
is \emph{generic}. A classical result asserts that a generic framework $(G,\rho)$ in $\mathbb{R}^d$ is rigid if and only if all generic frameworks $(G,\rho')$ in $\mathbb{R}^d$ are rigid~\cite{AsimowRoth1978}, analogously for the global rigidity~\cite{Connelly2005,GortlerHealyThurston2007}.
Hence, we say $G$ is \emph{generically (globally) $d$-rigid} if some framework $(G,\rho)$ in~$\mathbb{R}^d$ is (globally) rigid.

\section{Frameworks with unit-length bars}
\label{sec:defs}
We call a graph $G=(V,E)$ a \emph{$d$-unit-distance graph} if there exists an injective realisation $\rho\colon V\rightarrow \mathbb{R}^d$ with 
$vw\in E \Rightarrow \|\rho(v)-\rho(w)\|=1$.
Such a realisation~$\rho$ is said to be a \emph{$d$-unit-distance realisation} of $G$ and we call the pair $(G,\rho)$ a \emph{$d$-unit-distance framework}.
Note that in some works, e.g., \cite{AbelDemaineEtAl2025}, injectivity is not required.

A \emph{$d$-sphere graph} is
a graph $G=(V,E)$ such that there exists a $d$-unit distance realisation $\rho\colon V\rightarrow \mathbb{R}^d$ with  
$vw\not\in E \Rightarrow \|\rho(v)-\rho(w)\|>1$.
Such a realisation~$\rho$ is said to be a \emph{$d$-sphere realisation} of~$G$, and we call the pair $(G,\rho)$ a \emph{$d$-sphere framework}. We 
use the terms penny graph/realisation/framework and marble graph/realisation/framework for the 2- and 3-dimensional cases respectively.

When $d=2$, we consider an additional notion: a graph $G$ is a \emph{matchstick graph} if there is a $2$-unit-distance framework $(G,\rho)$ that is planar. Such a realisation $\rho$ is said to be a \emph{matchstick realisation} of $G$ and we call the pair $(G,\rho)$ a \emph{matchstick framework}.

A $d$-unit-distance/$d$-sphere/matchstick graph $G$ is \emph{(globally) $d$-unit-distance}/\emph{$d$-sphere}/\emph{match\-stick rigid}
if there exist finitely many (only one) $d$-unit-distance/$d$-sphere/matchstick realisations of $G$ in $\mathbb{R}^d$ modulo isometries.
Again, we use \emph{(globally) penny-rigid} and \emph{(globally) marble-rigid} to denote (global) $d$-rigidity for the cases where $d=2,3$ respectively. For simplicity, we abbreviate penny-rigid, matchstick rigid and  2-unit-distance-rigid to \prigid, \mrigid\, and \urigid. 

\begin{remark}
    These definitions of (global) rigidity are related to the generic ones as follows.
    Let $\ell:E\rightarrow\mathbb{R}$ be the bar-lengths of a generic framework $(G,\rho')$ with edge set $E$.
    Let $\mathcal{S}_d$ be the set of all realisations~$\rho$ of $G$ in $\mathbb{R}^d$
    up to congruence such that $\|\rho(u)-\rho(v)\|=\ell(uv)$ for all $uv\in E$.
    It is a well-known fact that $G$ is generically (globally) $d$-rigid if and only
    if $\mathcal{S}_d$ is finite ($|\mathcal{S}_d|=1$);
    see, e.g., \cite{GortlerHealyThurston2007} for more details.
    The definitions of rigidity and global rigidity for frameworks with unit-length bars are therefore a natural analogue of generic rigidity and global rigidity: now, we simply substitute $\ell$ with a map that assigns 1 to each edge.
\end{remark}

In the all figures except for the last two we use the following colour code to explain different properties:
\begin{itemize}[noitemsep]
    \item Gray \iedge{colgraphe} and Blue \iedge{colB} describe $2$-trees, whose realisation is unique (see \Cref{lem:d_sphere_unique_realisation});
    \item Yellow \iedge{colO} and Green \iedge{colG} represent subgraphs that may flip;
    \item Red \iedge{colR} represents subgraphs that are rigid in some realisation but flexible in others;
    \item Purple \ivertex{colP} vertices are those that prevent some property.
\end{itemize}

\Cref{fig:pennyRigid} shows a penny graph which is \prigid\, but not globally \prigid.
There are the two penny realisations from the figure
since pinning the bottom grey part, the purple vertex is forced to lie on both dashed circles (the smaller circle indicates the restriction caused by removing the green block and the larger one describes the restriction caused by the green subgraph).
Further the green part can be reflected along the line between the two vertices where green and gray touch (dotted line),
yielding another two penny realisations.

\begin{figure}[ht]
    \centering
    \begin{tikzpicture}[scale=0.75]
        \coordinate (dx) at (1,0);
        \coordinate (dy) at ($(0,{sqrt(3)/2})$);
        \coordinate (c) at ($0*(dx)+4*(dy)$);
        \clip ($(-1.5,-1)$) rectangle ($6*(dx)+7*(dy)+(0.2,0.2)$);
        \draw[dashed,black!40!white] (c) circle[radius=1cm];
        \node[vertex] (1) at ($0*(dx)+0*(dy)$) {};
        \node[vertex] (2) at ($1*(dx)+0*(dy)$) {};
        \node[vertex] (3) at ($2*(dx)+0*(dy)$) {};
        \node[vertex] (4) at ($-1/2*(dx)+1*(dy)$) {};
        \node[vertex] (5) at ($1/2*(dx)+1*(dy)$) {};
        \node[vertex] (6) at ($3/2*(dx)+1*(dy)$) {};
        \node[vertex] (7) at ($5/2*(dx)+1*(dy)$) {};
        \node[vertex] (8) at ($2*(dx)+2*(dy)$) {};
        \node[vertex] (9) at ($3*(dx)+2*(dy)$) {};
        \node[vertex] (10) at ($4*(dx)+2*(dy)$) {};
        \node[vertex] (11) at ($5*(dx)+2*(dy)$) {};
        \node[vertex] (15) at ($5/2*(dx)+3*(dy)$) {};
        \node[vertex] (16) at ($7/2*(dx)+3*(dy)$) {};
        \node[vertex] (17) at ($9/2*(dx)+3*(dy)$) {};
        \node[vertex] (18) at ($11/2*(dx)+3*(dy)$) {};
        \node[vertex] (33) at ($0*(dx)+2*(dy)$) {};
        \node[vertex] (34) at ($3*(dx)+4*(dy)$) {};
        \node[vertex] (35) at ($5*(dx)+4*(dy)$) {};
        \node[vertex] (38) at ($0*(dx)+6*(dy)$) {};
        \node[vertex] (39) at ($1*(dx)+6*(dy)$) {};
        \node[vertex] (40) at ($2*(dx)+6*(dy)$) {};
        \node[vertex] (41) at ($3*(dx)+6*(dy)$) {};
        \node[vertex] (42) at ($4*(dx)+6*(dy)$) {};
        \node[vertex] (43) at ($5*(dx)+6*(dy)$) {};
        \node[vertex] (44) at ($6*(dx)+6*(dy)$) {};
        \node[vertex] (45) at ($1/2*(dx)+7*(dy)$) {};
        \node[vertex] (46) at ($3/2*(dx)+7*(dy)$) {};
        \node[vertex] (47) at ($5/2*(dx)+7*(dy)$) {};
        \node[vertex] (48) at ($7/2*(dx)+7*(dy)$) {};
        \node[vertex] (49) at ($9/2*(dx)+7*(dy)$) {};
        \node[vertex] (50) at ($11/2*(dx)+7*(dy)$) {};
        \node[hvertex] (52) at ($1/2*(dx)+5*(dy)$) {};
        \node[vertex] (53) at ($7/2*(dx)+5*(dy)$) {};
        \node[vertex] (54) at ($11/2*(dx)+5*(dy)$) {};
        \node[vertex] (72) at ($-1/2*(dx)+3*(dy)$) {};
        \node[vertex] (73) at ($1/2*(dx)+3*(dy)$) {};
        \node[vertex] (74) at ($0*(dx)+4*(dy)$) {};
        \node[vertex] (75) at ($1*(dx)+4*(dy)$) {};
        \draw[dashed,black!40!white] (33) circle[radius={veclen(5/2,sqrt(3)/2)}];
        \draw[dotted] ($(33)!-0.25!(52)$)--($(33)!1.25!(52)$);
        \draw[edge] (1)--(2) (1)--(4) (1)--(5) (2)--(3) (2)--(5) (2)--(6) (3)--(6) (3)--(7) (4)--(5) (4)--(33) (5)--(6) (5)--(33) (6)--(7) (6)--(8) (7)--(8) (7)--(9) (8)--(9) (8)--(15) (9)--(10) (9)--(15) (9)--(16) (10)--(11) (10)--(16) (10)--(17) (11)--(17) (11)--(18) (15)--(16) (15)--(34) (16)--(17) (16)--(34) (17)--(18) (17)--(35) (18)--(35);
        \draw[edge,colO] (34)--(53) (35)--(54);
        \draw[edge] (38)--(39) (38)--(45) (38)--(52) (39)--(40) (39)--(45) (39)--(46) (39)--(52) (40)--(41) (40)--(46) (40)--(47) (41)--(42) (41)--(47) (41)--(48) (41)--(53) (42)--(43) (42)--(48) (42)--(49) (42)--(53) (43)--(44) (43)--(49) (43)--(50) (43)--(54) (44)--(50) (44)--(54) (45)--(46) (46)--(47) (47)--(48) (48)--(49) (49)--(50);
        \draw[edge,colG] (33)--(72) (33)--(73) (52)--(74) (52)--(75) (72)--(73) (72)--(74) (73)--(74) (73)--(75) (74)--(75);
    \end{tikzpicture}
    \qquad
    \begin{tikzpicture}[scale=0.75]
        \coordinate (dx) at (1,0);
        \coordinate (dy) at ($(0,{sqrt(3)/2})$);
        \coordinate (c) at ($0*(dx)+4*(dy)$);
        \clip ($(-1.5,-1)$) rectangle ($6*(dx)+7*(dy)+(0.2,0.2)$);
        \draw[dashed,black!40!white] (c) circle[radius=1cm];
        \node[vertex] (1) at ($0*(dx)+0*(dy)$) {};
        \node[vertex] (2) at ($1*(dx)+0*(dy)$) {};
        \node[vertex] (3) at ($2*(dx)+0*(dy)$) {};
        \node[vertex] (4) at ($-1/2*(dx)+1*(dy)$) {};
        \node[vertex] (5) at ($1/2*(dx)+1*(dy)$) {};
        \node[vertex] (6) at ($3/2*(dx)+1*(dy)$) {};
        \node[vertex] (7) at ($5/2*(dx)+1*(dy)$) {};
        \node[vertex] (8) at ($2*(dx)+2*(dy)$) {};
        \node[vertex] (9) at ($3*(dx)+2*(dy)$) {};
        \node[vertex] (10) at ($4*(dx)+2*(dy)$) {};
        \node[vertex] (11) at ($5*(dx)+2*(dy)$) {};
        \node[vertex] (15) at ($5/2*(dx)+3*(dy)$) {};
        \node[vertex] (16) at ($7/2*(dx)+3*(dy)$) {};
        \node[vertex] (17) at ($9/2*(dx)+3*(dy)$) {};
        \node[vertex] (18) at ($11/2*(dx)+3*(dy)$) {};
        \node[vertex] (33) at ($0*(dx)+2*(dy)$) {};
        \node[vertex] (34) at ($3*(dx)+4*(dy)$) {};
        \node[vertex] (35) at ($5*(dx)+4*(dy)$) {};
        \node[vertex] (38) at ($-1*(dx)+6*(dy)$) {};
        \node[vertex] (39) at ($0*(dx)+6*(dy)$) {};
        \node[vertex] (40) at ($1*(dx)+6*(dy)$) {};
        \node[vertex] (41) at ($2*(dx)+6*(dy)$) {};
        \node[vertex] (42) at ($3*(dx)+6*(dy)$) {};
        \node[vertex] (43) at ($4*(dx)+6*(dy)$) {};
        \node[vertex] (44) at ($5*(dx)+6*(dy)$) {};
        \node[vertex] (45) at ($-1/2*(dx)+7*(dy)$) {};
        \node[vertex] (46) at ($1/2*(dx)+7*(dy)$) {};
        \node[vertex] (47) at ($3/2*(dx)+7*(dy)$) {};
        \node[vertex] (48) at ($5/2*(dx)+7*(dy)$) {};
        \node[vertex] (49) at ($7/2*(dx)+7*(dy)$) {};
        \node[vertex] (50) at ($9/2*(dx)+7*(dy)$) {};
        \node[hvertex] (52) at ($-1/2*(dx)+5*(dy)$) {};
        \node[vertex] (53) at ($5/2*(dx)+5*(dy)$) {};
        \node[vertex] (54) at ($9/2*(dx)+5*(dy)$) {};
        \node[vertex] (72) at ($-11/14*(dx)+19/7*(dy)$) {};
        \node[vertex] (73) at ($1/7*(dx)+22/7*(dy)$) {};
        \node[vertex] (74) at ($-9/14*(dx)+27/7*(dy)$) {};
        \node[vertex] (75) at ($2/7*(dx)+30/7*(dy)$) {};
        \draw[dashed,black!40!white] (33) circle[radius={veclen(5/2,sqrt(3)/2)}];
        \draw[edge] (1)--(2) (1)--(4) (1)--(5) (2)--(3) (2)--(5) (2)--(6) (3)--(6) (3)--(7) (4)--(5) (4)--(33) (5)--(6) (5)--(33) (6)--(7) (6)--(8) (7)--(8) (7)--(9) (8)--(9) (8)--(15) (9)--(10) (9)--(15) (9)--(16) (10)--(11) (10)--(16) (10)--(17) (11)--(17) (11)--(18) (15)--(16) (15)--(34) (16)--(17) (16)--(34) (17)--(18) (17)--(35) (18)--(35);
        \draw[edge,colO] (34)--(53) (35)--(54);
        \draw[edge] (38)--(39) (38)--(45) (38)--(52) (39)--(40) (39)--(45) (39)--(46) (39)--(52) (40)--(41) (40)--(46) (40)--(47) (41)--(42) (41)--(47) (41)--(48) (41)--(53) (42)--(43) (42)--(48) (42)--(49) (42)--(53) (43)--(44) (43)--(49) (43)--(50) (43)--(54) (44)--(50) (44)--(54) (45)--(46) (46)--(47) (47)--(48) (48)--(49) (49)--(50);
        \draw[edge,colG] (33)--(72) (33)--(73) (52)--(74) (52)--(75) (72)--(73) (72)--(74) (73)--(74) (73)--(75) (74)--(75);
    \end{tikzpicture}
    \caption{A \prigid\ graph with two of four penny realisations, the other two are obtained by reflecting the green part along the dotted line.
    The dashed circles indicate the constraints for the purple vertex.
    }
    \label{fig:pennyRigid}
\end{figure}

Note that the underlying graph of a rigid $d$-sphere framework need not be $d$-sphere-rigid;
see \Cref{fig:genRigidPennyRigFlex},
which has a flexible and a rigid penny realisation.
Flipping the yellow subgraph as described in \Cref{fig:pennyRigid}
prevents the flex.
 
\begin{figure}[ht]
    \centering
    \begin{tikzpicture}[scale=0.75,rotate=90]
        \coordinate (dx) at (1,0);
        \coordinate (dy) at ($(0,{sqrt(3)/2})$);
        \coordinate (c) at ($0*(dx)+4*(dy)$);
        \node[vertex] (1) at ($0*(dx)+0*(dy)$) {};
        \node[vertex] (2) at ($1*(dx)+0*(dy)$) {};
        \node[vertex] (3) at ($2*(dx)+0*(dy)$) {};
        \node[vertex] (4) at ($-1/2*(dx)+1*(dy)$) {};
        \node[vertex] (5) at ($1/2*(dx)+1*(dy)$) {};
        \node[vertex] (6) at ($3/2*(dx)+1*(dy)$) {};
        \node[vertex] (7) at ($5/2*(dx)+1*(dy)$) {};
        \node[vertex] (8) at ($2*(dx)+2*(dy)$) {};
        \node[vertex] (9) at ($3*(dx)+2*(dy)$) {};
        \node[vertex] (10) at ($4*(dx)+2*(dy)$) {};
        \node[vertex] (11) at ($5*(dx)+2*(dy)$) {};
        \node[vertex] (12) at ($6*(dx)+2*(dy)$) {};
        \node[vertex] (13) at ($7*(dx)+2*(dy)$) {};
        \node[vertex] (14) at ($8*(dx)+2*(dy)$) {};
        \node[vertex] (15) at ($5/2*(dx)+3*(dy)$) {};
        \node[vertex] (16) at ($7/2*(dx)+3*(dy)$) {};
        \node[vertex] (17) at ($9/2*(dx)+3*(dy)$) {};
        \node[vertex] (18) at ($11/2*(dx)+3*(dy)$) {};
        \node[vertex] (19) at ($13/2*(dx)+3*(dy)$) {};
        \node[vertex] (20) at ($15/2*(dx)+3*(dy)$) {};
        \node[vertex] (21) at ($17/2*(dx)+3*(dy)$) {};
        \node[vertex] (22) at ($8*(dx)+4*(dy)$) {};
        \node[vertex] (23) at ($9*(dx)+4*(dy)$) {};
        \node[vertex] (24) at ($17/2*(dx)+5*(dy)$) {};
        \node[vertex] (25) at ($19/2*(dx)+5*(dy)$) {};
        \node[vertex] (26) at ($9*(dx)+6*(dy)$) {};
        \node[vertex] (27) at ($10*(dx)+6*(dy)$) {};
        \node[vertex] (28) at ($11*(dx)+6*(dy)$) {};
        \node[vertex] (29) at ($17/2*(dx)+7*(dy)$) {};
        \node[vertex] (30) at ($19/2*(dx)+7*(dy)$) {};
        \node[vertex] (31) at ($21/2*(dx)+7*(dy)$) {};
        \node[vertex] (32) at ($23/2*(dx)+7*(dy)$) {};
        \node[vertex] (33) at ($0*(dx)+2*(dy)$) {};
        \node[vertex] (34) at ($3*(dx)+4*(dy)$) {};
        \node[vertex] (35) at ($5*(dx)+4*(dy)$) {};
        \node[vertex] (36) at ($9*(dx)+8*(dy)$) {};
        \node[vertex] (37) at ($11*(dx)+8*(dy)$) {};
        \node[vertex] (38) at ($0*(dx)+6*(dy)$) {};
        \node[vertex] (39) at ($1*(dx)+6*(dy)$) {};
        \node[vertex] (40) at ($2*(dx)+6*(dy)$) {};
        \node[vertex] (41) at ($3*(dx)+6*(dy)$) {};
        \node[vertex] (42) at ($4*(dx)+6*(dy)$) {};
        \node[vertex] (43) at ($5*(dx)+6*(dy)$) {};
        \node[vertex] (44) at ($6*(dx)+6*(dy)$) {};
        \node[vertex] (45) at ($1/2*(dx)+7*(dy)$) {};
        \node[vertex] (46) at ($3/2*(dx)+7*(dy)$) {};
        \node[vertex] (47) at ($5/2*(dx)+7*(dy)$) {};
        \node[vertex] (48) at ($7/2*(dx)+7*(dy)$) {};
        \node[vertex] (49) at ($9/2*(dx)+7*(dy)$) {};
        \node[vertex] (50) at ($11/2*(dx)+7*(dy)$) {};
        \node[vertex] (51) at ($13/2*(dx)+7*(dy)$) {};
        \node[vertex] (52) at ($1/2*(dx)+5*(dy)$) {};
        \node[vertex] (53) at ($7/2*(dx)+5*(dy)$) {};
        \node[vertex] (54) at ($11/2*(dx)+5*(dy)$) {};
        \node[vertex] (55) at ($6*(dx)+8*(dy)$) {};
        \node[vertex] (56) at ($5*(dx)+10*(dy)$) {};
        \node[vertex] (57) at ($6*(dx)+10*(dy)$) {};
        \node[vertex] (58) at ($7*(dx)+10*(dy)$) {};
        \node[vertex] (59) at ($8*(dx)+10*(dy)$) {};
        \node[vertex] (60) at ($9*(dx)+10*(dy)$) {};
        \node[vertex] (61) at ($10*(dx)+10*(dy)$) {};
        \node[vertex] (62) at ($11*(dx)+10*(dy)$) {};
        \node[vertex] (63) at ($11/2*(dx)+11*(dy)$) {};
        \node[vertex] (64) at ($13/2*(dx)+11*(dy)$) {};
        \node[vertex] (65) at ($15/2*(dx)+11*(dy)$) {};
        \node[vertex] (66) at ($17/2*(dx)+11*(dy)$) {};
        \node[vertex] (67) at ($19/2*(dx)+11*(dy)$) {};
        \node[vertex] (68) at ($21/2*(dx)+11*(dy)$) {};
        \node[vertex] (69) at ($11/2*(dx)+9*(dy)$) {};
        \node[vertex] (70) at ($17/2*(dx)+9*(dy)$) {};
        \node[vertex] (71) at ($21/2*(dx)+9*(dy)$) {};
        \node[vertex] (72) at ($-1/2*(dx)+3*(dy)$) {};
        \node[vertex] (73) at ($1/2*(dx)+3*(dy)$) {};
        \node[vertex] (74) at ($0*(dx)+4*(dy)$) {};
        \node[vertex] (75) at ($1*(dx)+4*(dy)$) {};
        \draw[edge] (1)--(2) (1)--(4) (1)--(5) (2)--(3) (2)--(5) (2)--(6) (3)--(6) (3)--(7) (4)--(5) (4)--(33) (5)--(6) (5)--(33) (6)--(7) (6)--(8) (7)--(8) (7)--(9) (8)--(9) (8)--(15) (9)--(10) (9)--(15) (9)--(16) (10)--(11) (10)--(16) (10)--(17) (11)--(12) (11)--(17) (11)--(18) (12)--(13) (12)--(18) (12)--(19) (13)--(14) (13)--(19) (13)--(20) (14)--(20) (14)--(21) (15)--(16) (15)--(34) (16)--(17) (16)--(34) (17)--(18) (17)--(35) (18)--(19) (18)--(35) (19)--(20) (20)--(21) (20)--(22) (21)--(22) (21)--(23) (22)--(23) (22)--(24) (23)--(24) (23)--(25) (24)--(25) (24)--(26) (25)--(26) (25)--(27) (26)--(27) (26)--(29) (26)--(30) (27)--(28) (27)--(30) (27)--(31) (28)--(31) (28)--(32) (29)--(30) (29)--(36) (30)--(31) (30)--(36) (31)--(32) (31)--(37) (32)--(37); 
        \draw[edge,colO] (34)--(53) (35)--(54);
        \draw[edge] (38)--(39) (38)--(45) (38)--(52) (39)--(40) (39)--(45) (39)--(46) (39)--(52) (40)--(41) (40)--(46) (40)--(47) (41)--(42) (41)--(47) (41)--(48) (41)--(53) (42)--(43) (42)--(48) (42)--(49) (42)--(53) (43)--(44) (43)--(49) (43)--(50) (43)--(54) (44)--(50) (44)--(51) (44)--(54) (45)--(46) (46)--(47) (47)--(48) (48)--(49) (49)--(50) (50)--(51) (50)--(55) (51)--(55);
        \draw[edge,colR] (36)--(70) (37)--(71) (55)--(69);
        \draw[edge,colB] (56)--(57) (56)--(63) (56)--(69) (57)--(58) (57)--(63) (57)--(64) (57)--(69) (58)--(59) (58)--(64) (58)--(65) (59)--(60) (59)--(65) (59)--(66) (59)--(70) (60)--(61) (60)--(66) (60)--(67) (60)--(70) (61)--(62) (61)--(67) (61)--(68) (61)--(71) (62)--(68) (62)--(71) (63)--(64) (64)--(65) (65)--(66) (66)--(67) (67)--(68);
        \draw[edge,colO] (33)--(72) (33)--(73) (52)--(74) (52)--(75) (72)--(73) (72)--(74) (73)--(74) (73)--(75) (74)--(75);
    \end{tikzpicture}
    \quad
    \begin{tikzpicture}[scale=0.75,rotate=90]
        \coordinate (dx) at (1,0);
        \coordinate (dy) at ($(0,{sqrt(3)/2})$);
        \coordinate (c) at ($0*(dx)+4*(dy)$);
        \node[vertex] (1) at ($0*(dx)+0*(dy)$) {};
        \node[vertex] (2) at ($1*(dx)+0*(dy)$) {};
        \node[vertex] (3) at ($2*(dx)+0*(dy)$) {};
        \node[vertex] (4) at ($-1/2*(dx)+1*(dy)$) {};
        \node[vertex] (5) at ($1/2*(dx)+1*(dy)$) {};
        \node[vertex] (6) at ($3/2*(dx)+1*(dy)$) {};
        \node[vertex] (7) at ($5/2*(dx)+1*(dy)$) {};
        \node[vertex] (8) at ($2*(dx)+2*(dy)$) {};
        \node[vertex] (9) at ($3*(dx)+2*(dy)$) {};
        \node[vertex] (10) at ($4*(dx)+2*(dy)$) {};
        \node[vertex] (11) at ($5*(dx)+2*(dy)$) {};
        \node[vertex] (12) at ($6*(dx)+2*(dy)$) {};
        \node[vertex] (13) at ($7*(dx)+2*(dy)$) {};
        \node[vertex] (14) at ($8*(dx)+2*(dy)$) {};
        \node[vertex] (15) at ($5/2*(dx)+3*(dy)$) {};
        \node[vertex] (16) at ($7/2*(dx)+3*(dy)$) {};
        \node[vertex] (17) at ($9/2*(dx)+3*(dy)$) {};
        \node[vertex] (18) at ($11/2*(dx)+3*(dy)$) {};
        \node[vertex] (19) at ($13/2*(dx)+3*(dy)$) {};
        \node[vertex] (20) at ($15/2*(dx)+3*(dy)$) {};
        \node[vertex] (21) at ($17/2*(dx)+3*(dy)$) {};
        \node[vertex] (22) at ($8*(dx)+4*(dy)$) {};
        \node[vertex] (23) at ($9*(dx)+4*(dy)$) {};
        \node[vertex] (24) at ($17/2*(dx)+5*(dy)$) {};
        \node[vertex] (25) at ($19/2*(dx)+5*(dy)$) {};
        \node[vertex] (26) at ($9*(dx)+6*(dy)$) {};
        \node[vertex] (27) at ($10*(dx)+6*(dy)$) {};
        \node[vertex] (28) at ($11*(dx)+6*(dy)$) {};
        \node[vertex] (29) at ($17/2*(dx)+7*(dy)$) {};
        \node[vertex] (30) at ($19/2*(dx)+7*(dy)$) {};
        \node[vertex] (31) at ($21/2*(dx)+7*(dy)$) {};
        \node[vertex] (32) at ($23/2*(dx)+7*(dy)$) {};
        \node[vertex] (33) at ($0*(dx)+2*(dy)$) {};
        \node[vertex] (34) at ($3*(dx)+4*(dy)$) {};
        \node[vertex] (35) at ($5*(dx)+4*(dy)$) {};
        \node[vertex] (36) at ($9*(dx)+8*(dy)$) {};
        \node[vertex] (37) at ($11*(dx)+8*(dy)$) {};
        \node[vertex] (38) at ($-1*(dx)+6*(dy)$) {};
        \node[vertex] (39) at ($0*(dx)+6*(dy)$) {};
        \node[vertex] (40) at ($1*(dx)+6*(dy)$) {};
        \node[vertex] (41) at ($2*(dx)+6*(dy)$) {};
        \node[vertex] (42) at ($3*(dx)+6*(dy)$) {};
        \node[vertex] (43) at ($4*(dx)+6*(dy)$) {};
        \node[vertex] (44) at ($5*(dx)+6*(dy)$) {};
        \node[vertex] (45) at ($-1/2*(dx)+7*(dy)$) {};
        \node[vertex] (46) at ($1/2*(dx)+7*(dy)$) {};
        \node[vertex] (47) at ($3/2*(dx)+7*(dy)$) {};
        \node[vertex] (48) at ($5/2*(dx)+7*(dy)$) {};
        \node[vertex] (49) at ($7/2*(dx)+7*(dy)$) {};
        \node[vertex] (50) at ($9/2*(dx)+7*(dy)$) {};
        \node[vertex] (51) at ($11/2*(dx)+7*(dy)$) {};
        \node[vertex] (52) at ($-1/2*(dx)+5*(dy)$) {};
        \node[vertex] (53) at ($5/2*(dx)+5*(dy)$) {};
        \node[vertex] (54) at ($9/2*(dx)+5*(dy)$) {};
        \node[vertex] (55) at ($5*(dx)+8*(dy)$) {};
        \node[vertex] (56) at ($5*(dx)+10*(dy)$) {};
        \node[vertex] (57) at ($6*(dx)+10*(dy)$) {};
        \node[vertex] (58) at ($7*(dx)+10*(dy)$) {};
        \node[vertex] (59) at ($8*(dx)+10*(dy)$) {};
        \node[vertex] (60) at ($9*(dx)+10*(dy)$) {};
        \node[vertex] (61) at ($10*(dx)+10*(dy)$) {};
        \node[vertex] (62) at ($11*(dx)+10*(dy)$) {};
        \node[vertex] (63) at ($11/2*(dx)+11*(dy)$) {};
        \node[vertex] (64) at ($13/2*(dx)+11*(dy)$) {};
        \node[vertex] (65) at ($15/2*(dx)+11*(dy)$) {};
        \node[vertex] (66) at ($17/2*(dx)+11*(dy)$) {};
        \node[vertex] (67) at ($19/2*(dx)+11*(dy)$) {};
        \node[vertex] (68) at ($21/2*(dx)+11*(dy)$) {};
        \node[vertex] (69) at ($11/2*(dx)+9*(dy)$) {};
        \node[vertex] (70) at ($17/2*(dx)+9*(dy)$) {};
        \node[vertex] (71) at ($21/2*(dx)+9*(dy)$) {};
        \node[vertex] (72) at ($1/2*(dx)+3*(dy)$) {};
        \node[vertex] (73) at ($-1/2*(dx)+3*(dy)$) {};
        \node[vertex] (74) at ($0*(dx)+4*(dy)$) {};
        \node[vertex] (75) at ($-1*(dx)+4*(dy)$) {};
        \draw[edge] (1)--(2) (1)--(4) (1)--(5) (2)--(3) (2)--(5) (2)--(6) (3)--(6) (3)--(7) (4)--(5) (4)--(33) (5)--(6) (5)--(33) (6)--(7) (6)--(8) (7)--(8) (7)--(9) (8)--(9) (8)--(15) (9)--(10) (9)--(15) (9)--(16) (10)--(11) (10)--(16) (10)--(17) (11)--(12) (11)--(17) (11)--(18) (12)--(13) (12)--(18) (12)--(19) (13)--(14) (13)--(19) (13)--(20) (14)--(20) (14)--(21) (15)--(16) (15)--(34) (16)--(17) (16)--(34) (17)--(18) (17)--(35) (18)--(19) (18)--(35) (19)--(20) (20)--(21) (20)--(22) (21)--(22) (21)--(23) (22)--(23) (22)--(24) (23)--(24) (23)--(25) (24)--(25) (24)--(26) (25)--(26) (25)--(27) (26)--(27) (26)--(29) (26)--(30) (27)--(28) (27)--(30) (27)--(31) (28)--(31) (28)--(32) (29)--(30) (29)--(36) (30)--(31) (30)--(36) (31)--(32) (31)--(37) (32)--(37); 
        \draw[edge,colO] (34)--(53) (35)--(54);
        \draw[edge] (38)--(39) (38)--(45) (38)--(52) (39)--(40) (39)--(45) (39)--(46) (39)--(52) (40)--(41) (40)--(46) (40)--(47) (41)--(42) (41)--(47) (41)--(48) (41)--(53) (42)--(43) (42)--(48) (42)--(49) (42)--(53) (43)--(44) (43)--(49) (43)--(50) (43)--(54) (44)--(50) (44)--(51) (44)--(54) (45)--(46) (46)--(47) (47)--(48) (48)--(49) (49)--(50) (50)--(51) (50)--(55) (51)--(55);
        \draw[edge,colX] (36)--(70) (37)--(71) (55)--(69);
        \draw[edge,colB] (56)--(57) (56)--(63) (56)--(69) (57)--(58) (57)--(63) (57)--(64) (57)--(69) (58)--(59) (58)--(64) (58)--(65) (59)--(60) (59)--(65) (59)--(66) (59)--(70) (60)--(61) (60)--(66) (60)--(67) (60)--(70) (61)--(62) (61)--(67) (61)--(68) (61)--(71) (62)--(68) (62)--(71) (63)--(64) (64)--(65) (65)--(66) (66)--(67) (67)--(68);
        \draw[edge,colO] (33)--(72) (33)--(73) (52)--(74) (52)--(75) (72)--(73) (72)--(74) (73)--(74) (73)--(75) (74)--(75);
    \end{tikzpicture}
    \caption{Graph with a flexible (left) and rigid (right) penny framework. In the top framework, the blue subgraph can make a circular motion, while it cannot in the bottom one.}
    \label{fig:genRigidPennyRigFlex}
\end{figure}

\section{Relations between unit-distance properties}
\label{sec:figs}
In this section, we compare unit-distance rigidity, sphere rigidity and matchstick rigidity. We start by the following simple observation that follows directly from the definitions.

\begin{remark}
    \label{rem:relationship}
    Every $d$-sphere graph is a $d$-unit-distance graph.
    Moreover, every penny graph is a matchstick graph and every matchstick graph is a $2$-unit-distance graph. The same is true for frameworks.
\end{remark}

In the previous examples (\Cref{fig:pennyRigid,fig:genRigidPennyRigFlex}), we implicitly use the fact that the 2-trees are globally \prigid, in particular the subgraphs indicated by gray edges in the figures.
Using an idea from the proof of \cite[Theorem~2.6]{DewarGraseggerEtAl2025},
we now give a formal proof of a slightly more general statement which,
in particular, guarantees that the $2$-tree subgraphs in the figures cannot have different realisations than shown.

\begin{lemma}
    \label{lem:d_sphere_unique_realisation}
    Let $G$ be a $d$-tree. If there are three vertices that have common neighbours inducing $K_d$,
    then $G$ is not a $d$-unit-distance graph.
    If $G$ is a $d$-sphere/matchstick/$d$-unit-distance graph,
    then $G$ is globally $d$-sphere/matchstick/$d$-unit-distance rigid;
    the matchstick case holds for $d=2$.
\end{lemma}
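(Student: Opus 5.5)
We argue by induction along a $d$-tree construction sequence, i.e.\ an ordering $v_1,\dots,v_n$ of $V$ such that $v_1,\dots,v_{d+1}$ induce $K_{d+1}$ and each later $v_k$ is adjacent to exactly $d$ earlier vertices, which induce a $K_d$. The key geometric fact is that in any $d$-unit-distance realisation $\rho$ the $d+1$ vertices of a $K_{d+1}$ are mapped to the vertices of a regular unit simplex. Moreover, given a regular unit $(d-1)$-simplex (the image of a $K_d$), there are exactly two points at unit distance from all its vertices: the two apexes $p_+$ and $p_-$, which are mirror images in the hyperplane spanned by the $K_d$.

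For the first claim, suppose three distinct vertices $a,b,c$ are all adjacent to every vertex of some $K_d$ in $G$. In any unit-distance realisation, $\rho(a),\rho(b),\rho(c)$ must each lie in $\{p_+,p_-\}$. By pigeonhole two of them coincide, which contradicts the injectivity of $\rho$. Hence $G$ is not a $d$-unit-distance graph.

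For the second claim, assume $G$ has a realisation of the relevant type and fix two such realisations $\rho,\rho'$. Since the class is defined up to isometry, we compose $\rho'$ with an isometry so that $\rho'$ agrees with $\rho$ on the initial $K_{d+1}$; this is possible because any two regular unit simplices are congruent. We then show inductively that $\rho'(v_k)=\rho(v_k)$. When $v_k$ is added, it is attached to a $K_d$ with vertex set $K$. By induction $\rho$ and $\rho'$ agree on $K$, so $\rho(v_k),\rho'(v_k)\in\{p_+,p_-\}$. It therefore suffices to show that one of $p_\pm$ is already occupied by the image of an earlier vertex, which forces the choice by injectivity.

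Finding that occupied apex is the main step. Every $K_d$ in a $d$-tree built this way lies in some $K_{d+1}$ that appears before $v_k$. Indeed, the clique $K$ was created together with (or is a facet of) a $(d+1)$-clique $K\cup\{w\}$ with $w$ earlier than $v_k$. We have $\rho(w)=\rho'(w)\in\{p_+,p_-\}$ since $w$ is adjacent to all of $K$. By injectivity $v_k$ must take the other apex in both realisations, so $\rho'(v_k)=\rho(v_k)$. This also reproves the first claim in the form actually used, since a third common neighbour would have no free apex. The argument uses only the unit edge lengths and injectivity. Because sphere and matchstick realisations are in particular unit-distance realisations (\Cref{rem:relationship}), the same uniqueness holds in all three classes: the unique realisation up to isometry exists by hypothesis, and every other realisation equals it. For matchstick graphs $d=2$ is forced by definition.

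The only subtle point, which I would check carefully, is the existence of the earlier vertex $w$. When $v_k$ attaches to $K$, either $K$ is a facet of the initial $K_{d+1}$, or $K$ contains some earlier added vertex $u$ together with $d-1$ vertices of the clique $u$ was attached to. In the second case $K$ is a facet of the $(d+1)$-clique formed at $u$'s insertion, so $w$ can be taken to be the remaining vertex of that clique.
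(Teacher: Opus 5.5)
Your proof is correct and is essentially the paper's argument: it rests on the same two facts, namely that only two points lie at unit distance from a unit $K_d$ and that the vertex completing that $K_d$ to an earlier $K_{d+1}$ already occupies one of them. The only difference is bookkeeping: the paper inducts by deleting a last-added vertex $v$, whereas you build two aligned realisations forward. (In your final paragraph, choose $u$ as the most recently added vertex of $K$, so that the other vertices of $K$ really do lie in the clique $u$ was attached to.)
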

\begin{proof}
    Since there are only two unit-distance realisations of $K_{d+1}$ in $\mathbb{R}^d$
    up to direct isometries, no three distinct vertices can 
    have common neighbours inducing $K_d$ in a $d$-unit-distance realisation,
    as it is required to be injective.
    
    Hence, suppose that no three vertices have common neighbours inducing $K_d$
    and $G$ is a $d$-sphere/matchstick/$d$-unit-distance $d$-tree.
    We proceed by induction to prove the latter implication in the statement.
    The statement is clearly true for $K_{d+1}$.
    Now suppose the statement holds for all graphs on fewer vertices than $G$,
    and suppose that $G$ is a $d$-sphere/matchstick/$d$-unit-distance graph.
    Let $v$ be a degree $d$ vertex of $G$ such that $G'=G -v$ is a $d$-tree and $N_G(v)$ be the set of $d$ neighbours of $v$ in $G$.
    By the definition of $d$-tree, $N_G(v)$ is the vertex set of a complete graph $K_d$, which is contained in a $K_{d+1}$ subgraph of $G'$. 
    By our assumption, this implies that there is exactly one vertex $u\neq v$ such that $N_G(v)\subseteq N_G(u)$.
    Since $G$ is a $d$-sphere/matchstick/$d$-unit-distance graph, so is $G'$. Therefore,
    by the inductive hypothesis, there is a unique $d$-sphere/matchstick/$d$-unit-distance framework $(G',\rho')$ in $\mathbb{R}^d$ up to isometries.
    
    Since there are two unit-distance realisations of $K_{d+1}$ in~$\mathbb{R}^d$ up to direct isometries,
    there are exactly two points $A_1, A_2$ such that $\|A_i-\rho'(w)\|=1$ for all $w\in N_G(v)$.
    One of them is $\rho'(u)$, say $A_2$.
    Therefore, the realisation $\rho$ of $G$ in $\mathbb{R}^d$ defined by letting $\rho(v)=A_1$ and $\rho(w)=\rho'(w)$ for $w\in V(G')$ is the unique realisation of $G$ in $\mathbb{R}^d$ up to isometries.
\end{proof}

\begin{remark}
    Following the idea of the proof of \Cref{lem:d_sphere_unique_realisation},
    it can be checked in polynomially many operations over reals
    whether a $d$-tree is $d$-sphere/matchstick/$d$-unit-distance.
\end{remark}

\begin{remark}
    A $2$-tree is a matchstick graph if and only if it is a $2$-unit-distance graph.
    While one implication is \Cref{rem:relationship}, the other follows from the fact that
    a $2$-unit-distance realisation of a $2$-tree lies on a regular triangular grid, which is planar.
\end{remark}

Section 4 in \cite{DewarGraseggerEtAl2025} asks whether there is a \prigid\, (but not globally \prigid) graph that is generically globally rigid,
and analogously for marble-graphs.
It is easy to see that the graph given in \Cref{fig:genGlobPennyNonGlob} is \prigid\ by a similar argument as for the graph in \Cref{fig:pennyRigid} (and \Cref{lem:d_sphere_unique_realisation}).
Since the figure shows two distinct penny-realisations, this example answers the question in \cite{DewarGraseggerEtAl2025} positively.
The same idea can be extended to dimension three, as shown in \Cref{fig:genGlobMarbleNonGlob}, answering the analogous question for marble-graphs.
For both examples, the graph data and coordinates can be found in \cite{DewarGraseggerEtAl2026}. 
We have checked global rigidity computationally using \cite{AdrianGalletEtAl2026,Grasegger2022}.

\begin{figure}[ht!]
    \centering
    \begin{tikzpicture}[scale=0.75]
        \coordinate (dx) at (1,0);
        \coordinate (dy) at ($(0,{sqrt(3)/2})$);
        \node[vertex] (1) at ($0*(dx)+0*(dy)$) {};
        \node[vertex] (2) at ($1*(dx)+0*(dy)$) {};
        \node[vertex] (3) at ($2*(dx)+0*(dy)$) {};
        \node[vertex] (4) at ($3*(dx)+0*(dy)$) {};
        \node[vertex] (5) at ($4*(dx)+0*(dy)$) {};
        \node[vertex] (6) at ($-1/2*(dx)+1*(dy)$) {};
        \node[vertex] (7) at ($1/2*(dx)+1*(dy)$) {};
        \node[vertex] (8) at ($3/2*(dx)+1*(dy)$) {};
        \node[vertex] (9) at ($5/2*(dx)+1*(dy)$) {};
        \node[vertex] (10) at ($7/2*(dx)+1*(dy)$) {};
        \node[vertex] (11) at ($9/2*(dx)+1*(dy)$) {};
        \node[vertex] (12) at ($4*(dx)+2*(dy)$) {};
        \node[vertex] (13) at ($5*(dx)+2*(dy)$) {};
        \node[vertex] (14) at ($9/2*(dx)+3*(dy)$) {};
        \node[vertex] (15) at ($11/2*(dx)+3*(dy)$) {};
        \node[vertex] (16) at ($5*(dx)+4*(dy)$) {};
        \node[vertex] (17) at ($6*(dx)+4*(dy)$) {};
        \node[vertex] (18) at ($9/2*(dx)+5*(dy)$) {};
        \node[vertex] (19) at ($11/2*(dx)+5*(dy)$) {};
        \node[vertex] (20) at ($4*(dx)+6*(dy)$) {};
        \node[vertex] (21) at ($5*(dx)+6*(dy)$) {};
        \node[vertex] (22) at ($7/2*(dx)+7*(dy)$) {};
        \node[vertex] (23) at ($9/2*(dx)+7*(dy)$) {};
        \node[vertex] (24) at ($-1*(dx)+8*(dy)$) {};
        \node[vertex] (25) at ($0*(dx)+8*(dy)$) {};
        \node[vertex] (26) at ($1*(dx)+8*(dy)$) {};
        \node[vertex] (27) at ($2*(dx)+8*(dy)$) {};
        \node[vertex] (28) at ($3*(dx)+8*(dy)$) {};
        \node[vertex] (29) at ($4*(dx)+8*(dy)$) {};
        \node[vertex] (30) at ($-1/2*(dx)+9*(dy)$) {};
        \node[vertex] (31) at ($1/2*(dx)+9*(dy)$) {};
        \node[vertex] (32) at ($3/2*(dx)+9*(dy)$) {};
        \node[vertex] (33) at ($5/2*(dx)+9*(dy)$) {};
        \node[vertex] (34) at ($7/2*(dx)+9*(dy)$) {};
        \node[vertex] (35) at ($0*(dx)+2*(dy)$) {};
        \node[vertex] (36) at ($2*(dx)+2*(dy)$) {};
        \node[vertex] (37) at ($-1/2*(dx)+7*(dy)$) {};
        \node[vertex] (38) at ($3/2*(dx)+7*(dy)$) {};
        \node[vertex] (39) at ($-1*(dx)+4*(dy)$) {};
        \node[vertex] (40) at ($0*(dx)+4*(dy)$) {};
        \node[vertex] (41) at ($1*(dx)+4*(dy)$) {};
        \node[vertex] (42) at ($2*(dx)+4*(dy)$) {};
        \node[vertex] (43) at ($-3/2*(dx)+5*(dy)$) {};
        \node[vertex] (44) at ($-1/2*(dx)+5*(dy)$) {};
        \node[vertex] (45) at ($1/2*(dx)+5*(dy)$) {};
        \node[vertex] (46) at ($3/2*(dx)+5*(dy)$) {};
        \node[vertex] (47) at ($-1/2*(dx)+3*(dy)$) {};
        \node[vertex] (48) at ($3/2*(dx)+3*(dy)$) {};
        \node[vertex] (49) at ($-1*(dx)+6*(dy)$) {};
        \node[vertex] (50) at ($1*(dx)+6*(dy)$) {};
        \draw[edge] (1)--(2) (1)--(6) (1)--(7) (2)--(3) (2)--(7) (2)--(8) (3)--(4) (3)--(8) (3)--(9) (4)--(5) (4)--(9) (4)--(10) (5)--(10) (5)--(11) (6)--(7) (6)--(35) (7)--(8) (7)--(35) (8)--(9) (8)--(36) (9)--(10) (9)--(36) (10)--(11) (10)--(12) (11)--(12) (11)--(13) (12)--(13) (12)--(14) (13)--(14) (13)--(15) (14)--(15) (14)--(16) (15)--(16) (15)--(17) (16)--(17) (16)--(18) (16)--(19) (17)--(19) (18)--(19) (18)--(20) (18)--(21) (19)--(21) (20)--(21) (20)--(22) (20)--(23) (21)--(23) (22)--(23) (22)--(28) (22)--(29) (23)--(29) (24)--(25) (24)--(30) (24)--(37) (25)--(26) (25)--(30) (25)--(31) (25)--(37) (26)--(27) (26)--(31) (26)--(32) (26)--(38) (27)--(28) (27)--(32) (27)--(33) (27)--(38) (28)--(29) (28)--(33) (28)--(34) (29)--(34) (30)--(31) (31)--(32) (32)--(33) (33)--(34);
        \draw[edge,colO] (35)--(47) (36)--(48) (37)--(49) (38)--(50);
        \draw[edge,colB] (39)--(40) (39)--(43) (39)--(44) (39)--(47) (40)--(41) (40)--(44) (40)--(45) (40)--(47) (41)--(42) (41)--(45) (41)--(46) (41)--(48) (42)--(46) (42)--(48) (43)--(44) (43)--(49) (44)--(45) (44)--(49) (45)--(46) (45)--(50) (46)--(50);
    \end{tikzpicture}
    \qquad
    \begin{tikzpicture}[scale=0.75]
        \coordinate (dx) at (1,0);
        \coordinate (dy) at ($(0,{sqrt(3)/2})$);
        \node[vertex] (1) at ($0*(dx)+0*(dy)$) {};
        \node[vertex] (2) at ($1*(dx)+0*(dy)$) {};
        \node[vertex] (3) at ($2*(dx)+0*(dy)$) {};
        \node[vertex] (4) at ($3*(dx)+0*(dy)$) {};
        \node[vertex] (5) at ($4*(dx)+0*(dy)$) {};
        \node[vertex] (6) at ($-1/2*(dx)+1*(dy)$) {};
        \node[vertex] (7) at ($1/2*(dx)+1*(dy)$) {};
        \node[vertex] (8) at ($3/2*(dx)+1*(dy)$) {};
        \node[vertex] (9) at ($5/2*(dx)+1*(dy)$) {};
        \node[vertex] (10) at ($7/2*(dx)+1*(dy)$) {};
        \node[vertex] (11) at ($9/2*(dx)+1*(dy)$) {};
        \node[vertex] (12) at ($4*(dx)+2*(dy)$) {};
        \node[vertex] (13) at ($5*(dx)+2*(dy)$) {};
        \node[vertex] (14) at ($9/2*(dx)+3*(dy)$) {};
        \node[vertex] (15) at ($11/2*(dx)+3*(dy)$) {};
        \node[vertex] (16) at ($5*(dx)+4*(dy)$) {};
        \node[vertex] (17) at ($6*(dx)+4*(dy)$) {};
        \node[vertex] (18) at ($9/2*(dx)+5*(dy)$) {};
        \node[vertex] (19) at ($11/2*(dx)+5*(dy)$) {};
        \node[vertex] (20) at ($4*(dx)+6*(dy)$) {};
        \node[vertex] (21) at ($5*(dx)+6*(dy)$) {};
        \node[vertex] (22) at ($7/2*(dx)+7*(dy)$) {};
        \node[vertex] (23) at ($9/2*(dx)+7*(dy)$) {};
        \node[vertex] (24) at ($-1*(dx)+8*(dy)$) {};
        \node[vertex] (25) at ($0*(dx)+8*(dy)$) {};
        \node[vertex] (26) at ($1*(dx)+8*(dy)$) {};
        \node[vertex] (27) at ($2*(dx)+8*(dy)$) {};
        \node[vertex] (28) at ($3*(dx)+8*(dy)$) {};
        \node[vertex] (29) at ($4*(dx)+8*(dy)$) {};
        \node[vertex] (30) at ($-1/2*(dx)+9*(dy)$) {};
        \node[vertex] (31) at ($1/2*(dx)+9*(dy)$) {};
        \node[vertex] (32) at ($3/2*(dx)+9*(dy)$) {};
        \node[vertex] (33) at ($5/2*(dx)+9*(dy)$) {};
        \node[vertex] (34) at ($7/2*(dx)+9*(dy)$) {};
        \node[vertex] (35) at ($0*(dx)+2*(dy)$) {};
        \node[vertex] (36) at ($2*(dx)+2*(dy)$) {};
        \node[vertex] (37) at ($-1/2*(dx)+7*(dy)$) {};
        \node[vertex] (38) at ($3/2*(dx)+7*(dy)$) {};
        \node[vertex] (39) at ($0*(dx)+4*(dy)$) {};
        \node[vertex] (40) at ($1*(dx)+4*(dy)$) {};
        \node[vertex] (41) at ($2*(dx)+4*(dy)$) {};
        \node[vertex] (42) at ($3*(dx)+4*(dy)$) {};
        \node[vertex] (43) at ($-1/2*(dx)+5*(dy)$) {};
        \node[vertex] (44) at ($1/2*(dx)+5*(dy)$) {};
        \node[vertex] (45) at ($3/2*(dx)+5*(dy)$) {};
        \node[vertex] (46) at ($5/2*(dx)+5*(dy)$) {};
        \node[vertex] (47) at ($1/2*(dx)+3*(dy)$) {};
        \node[vertex] (48) at ($5/2*(dx)+3*(dy)$) {};
        \node[vertex] (49) at ($0*(dx)+6*(dy)$) {};
        \node[vertex] (50) at ($2*(dx)+6*(dy)$) {};
        \draw[edge] (1)--(2) (1)--(6) (1)--(7) (2)--(3) (2)--(7) (2)--(8) (3)--(4) (3)--(8) (3)--(9) (4)--(5) (4)--(9) (4)--(10) (5)--(10) (5)--(11) (6)--(7) (6)--(35) (7)--(8) (7)--(35) (8)--(9) (8)--(36) (9)--(10) (9)--(36) (10)--(11) (10)--(12) (11)--(12) (11)--(13) (12)--(13) (12)--(14) (13)--(14) (13)--(15) (14)--(15) (14)--(16) (15)--(16) (15)--(17) (16)--(17) (16)--(18) (16)--(19) (17)--(19) (18)--(19) (18)--(20) (18)--(21) (19)--(21) (20)--(21) (20)--(22) (20)--(23) (21)--(23) (22)--(23) (22)--(28) (22)--(29) (23)--(29) (24)--(25) (24)--(30) (24)--(37) (25)--(26) (25)--(30) (25)--(31) (25)--(37) (26)--(27) (26)--(31) (26)--(32) (26)--(38) (27)--(28) (27)--(32) (27)--(33) (27)--(38) (28)--(29) (28)--(33) (28)--(34) (29)--(34) (30)--(31) (31)--(32) (32)--(33) (33)--(34);
        \draw[edge,colO] (35)--(47) (36)--(48) (37)--(49) (38)--(50);
        \draw[edge,colB] (39)--(40) (39)--(43) (39)--(44) (39)--(47) (40)--(41) (40)--(44) (40)--(45) (40)--(47) (41)--(42) (41)--(45) (41)--(46) (41)--(48) (42)--(46) (42)--(48) (43)--(44) (43)--(49) (44)--(45) (44)--(49) (45)--(46) (45)--(50) (46)--(50);
    \end{tikzpicture}
    \caption{A globally 2-rigid graph with two penny realisations.}
    \label{fig:genGlobPennyNonGlob}
\end{figure}

For the remainder of the section we restrict to $d=2$. We now establish further connections between penny and matchstick rigidity properties. See \Cref{tab:comparisontab} (yellow) for a summary.

\begin{table}[ht]
    \centering
    \begin{tikzpicture}[
        head/.style={align=center,minimum width=1.7cm,minimum height=1.5cm},
        mhead/.style={head,anchor=east,align=right,minimum width=1.5cm,minimum height=1.3cm},
        phead/.style={head,anchor=south west,},
        uhead/.style={head,anchor=north west},
        entry/.style={minimum width=1.7cm,minimum height=1.3cm,align=center,rectangle,anchor=west},
        cap/.style={},
        mcap/.style={cap},
        ucap/.style={cap},
        pcap/.style={cap},
        fillback/.style={opacity=0.5,rounded corners=6pt},
        every node/.style={font=\scriptsize}
    ]
        \coordinate (mleft) at (-0.1,0);
        \coordinate (h) at (0,-1.3);
        \node[mhead] (m1) at (mleft) {not\\\mrigid};
        \node[mhead] (m2) at ($(mleft)+(h)$) {\mrigid\\not glob.\\\mrigid};
        \node[mhead] (m3) at ($(mleft)+2*(h)$) {globally\\\mrigid};
        \node[rotate=90] at ($(m2.west)-(0.3,0)$) {matchstick rigidity};

        \coordinate (pup) at (0,0.75);
        \coordinate (w) at (1.7,0);
        \node[phead] (p1) at (pup) {not\\\prigid};
        \node[phead] (p2) at ($(pup)+(w)$) {\prigid\\not glob.\\\prigid};
        \node[phead] (p3) at ($(pup)+2*(w)$) {globally\\\prigid};
        \node[] at ($(p2.north)+(0,0.3)$) {penny rigidity};

        \coordinate (udown) at (0,-0.75);
        \node[uhead] (u1) at ($(udown)+2*(h)$) {not\\\urigid};
        \node[uhead] (u2) at ($(udown)+2*(h)+(w)$) {\urigid\\not glob.\\\urigid};
        \node[uhead] (u3) at ($(udown)+2*(h)+2*(w)$) {globally\\\urigid};
        \node[] at ($(u2.south)+(0,-0.3)$) {unit-distance rigidity};

        \node[entry] (e00) at (0,0) {4-cycle};
        \node[entry] (e11) at ($(w)+(h)$) {\Cref{fig:pennyMatchRigid}\\(left)};
        \node[entry] (e22) at ($2*(w)+2*(h)$) {
            \begin{tikzpicture}[scale=0.4]
                \node[vertex] (a1) at (0,0) {};
                \node[vertex] (a2) at (0:1) {};
                \node[vertex] (a3) at (0:2) {};
                \node[vertex] (b1) at (60:1) {};
                \path (60:1) ++(1,0) node[vertex] (b2) {};
                \node[vertex] (c1) at (60:2) {};
                \draw[edge] (a1)--(a2) (a2)--(a3) (a1)--(b1) (a2)--(b1) (a2)--(b2) (a3)--(b2) (b1)--(b2) (b1)--(c1) (b2)--(c1);
            \end{tikzpicture}
        };

        \node[entry] (e01) at ($(h)$) {\Cref{fig:matchRigUnitFlex}\\(without\\purple)};
        \node[entry] (e02) at ($2*(h)$) {\Cref{fig:matchRigUnitFlex}};
        \node[entry] (e12) at ($(w)+2*(h)$) {\Cref{fig:matchGlobRigUnitRig}};

        \node[entry] (e10) at ($(w)$) {\Cref{fig:pennyRigidMatchFlex}};
        \node[entry] (e20) at ($2*(w)$) {\Cref{fig:pennyGlobRigMatchFlex}};
        \node[entry] (e21) at ($2*(w)+(h)$) {\Cref{fig:pennyMatchRigid}\\(right)};

        \begin{pgfonlayer}{background}
            \fill[fillback,black!5!white] (-2.5,3)rectangle(5.75,-5.5);
            \fill[fillback,colB] (e02.south west)--(e22.south east)--(e22.north east)--(e22.north west)--(e11.north east)--(e11.north west)--(e00.north east)--(e00.north west)--cycle;
            \fill[fillback,colO] (e00.north west)--(e20.north east)--(e22.south east)--(e22.south west)--(e22.north west)--(e11.south west)--(e11.north west)--(e00.south west)--cycle;

            \fill[fillback,black!25!white] (m1.north west)--(m3.south west)--(m3.south east)--(m1.north east)--cycle;
            \fill[fillback,colB,opacity=0.7] (u1.north west)--(u1.south west)--(u3.south east)--(u3.north east)--cycle;
            \fill[fillback,colO,opacity=0.7] (p1.north west)--(p1.south west)--(p3.south east)--(p3.north east)--cycle;
        \end{pgfonlayer}        
    \end{tikzpicture}    
    \caption{Comparison of penny, 2-unit-distance and matchstick rigidity properties.
    The blue part refers to unit-distance rigidity, the yellow one to penny rigidity, and the diagonal covers both.
    Cases that are not covered do not exist by \Cref{rem:relationship}.}
    \label{tab:comparisontab}
\end{table}

\begin{figure}[!h]
    \centering
    \includegraphics[trim={0 0.4cm 0 0.3cm},clip,width=0.45\textwidth]{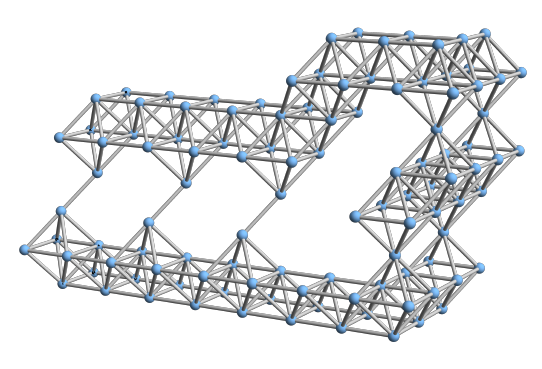}
    \includegraphics[trim={0 0.4cm 0 0.3cm},width=0.45\textwidth]{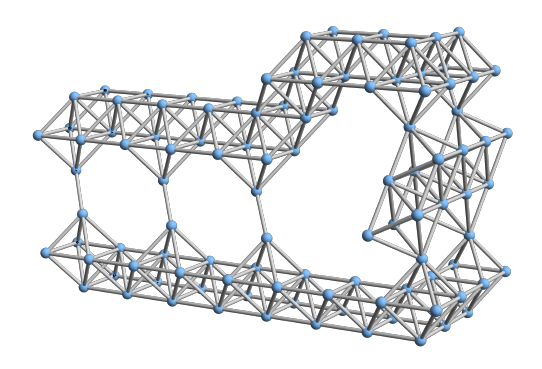}
    \caption{A globally 3-rigid graph with two of its finitely many marble realisations.}
    \label{fig:genGlobMarbleNonGlob}
\end{figure}

We first note that there is a \prigid\, graph which is neither \mrigid\, nor globally \prigid. The graph in \Cref{fig:pennyRigidMatchFlex} is not \mrigid, since the red path in the realisation shown on the left is flexible.
There are exactly two penny realisations of the graph: one shown on the right and one obtained from this by reflecting the green subgraph as described for \Cref{fig:pennyRigid}. 

\begin{figure}[ht]
    \centering
    \begin{tikzpicture}[scale=0.75,rotate=90]
        \coordinate (dx) at (0:1);
        \coordinate (dy) at (60:1);
        \coordinate (dyy) at (120:1);
        \node[vertex] (bt-1) at (dyy) {};
        \foreach \i [remember=\i as \io (initially -1)] in {0,1,2}
        {
            \node[vertex] (b\i) at ($\i*(dx)$) {};
            \node[vertex] (bt\i) at ($\i*(dx)+(dy)$) {};
            \draw[edge] (b\i)--(bt\i) (b\i)--(bt\io) (bt\i)--(bt\io);
            \ifnum\i>0
                \draw[edge] (b\i)--(b\io);
            \fi
        }
        \foreach \i [remember=\i as \io (initially 0)] in {3,4,...,8}
        {
            \node[vertex] (b\i) at ($-2*(dx)+\i*(dx)+2*(dy)$) {};
            \node[vertex] (bt\i) at ($-2*(dx)+\i*(dx)+3*(dy)$) {};
            \draw[edge] (b\i)--(bt\i);
            \ifnum\i>3
                \draw[edge] (bt\i)--(bt\io) (b\i)--(bt\io) (b\i)--(b\io);
            \fi
        }
        \draw[edge] (bt1)--(b3) (bt2)--(b3) (bt2)--(b4);
        \foreach \k [count=\i,evaluate=\k as \kp using int(\k+1)] in {-1,3,5}
        {
            \node[vertex] (s\i) at ($(bt\k)+(dy)$) {};
            \draw[edge] (s\i)--(bt\k) (s\i)--(bt\kp);
        }
        \foreach \i [remember=\i as \io (initially 0)] in {1,2,3}
        {
            \node[vertex] (cl\i) at ($(bt7)+\i*(dy)$) {};
            \node[vertex] (cr\i) at ($(bt8)+\i*(dy)$) {};
            \draw[edge] (cl\i)--(cr\i);
            \ifnum\i>1
                \draw[edge] (cl\i)--(cl\io) (cr\i)--(cr\io) (cl\i)--(cr\io);
            \fi
        }
        \node[vertex] (cl4) at ($(bt7)+4*(dy)$) {};
        \draw[edge] (cl4)--(cl3) (cl4)--(cr3);
        \draw[edge] (cl1)--(bt7) (cl1)--(bt8) (cr1)--(bt8);

        \foreach \i [remember=\i as \io (initially 0)] in {1,2,...,7}
        {
            \node[vertex] (d\i) at ($-4*(dx)+\i*(dx)+6*(dy)$) {};
            \node[vertex] (dt\i) at ($-4*(dx)+\i*(dx)+7*(dy)$) {};
            \draw[edge] (d\i)--(dt\i);
            \ifnum\i>1
                \draw[edge] (dt\i)--(dt\io) (d\i)--(dt\io) (d\i)--(d\io);
            \fi
        }
        \foreach \k [count=\i,evaluate=\k as \kp using int(\k+1)] in {1,4,6}
        {
            \node[vertex] (t\i) at ($(d\k)-(dyy)$) {};
            \draw[edge] (t\i)--(d\k) (t\i)--(d\kp);
        }
        \node[hvertex] (p) at ($(cl1)+(dyy)$) {};
        \draw[edge] (p)--(cl1) (p)--(cl2);

        \node[vertex] (f1) at ($(s1)+(dyy)$) {};
        \node[vertex] (f2) at ($(s1)+(dy)$) {};
        \draw[edge,colG] (f1)--(f2) (f1)--(s1) (f2)--(s1);
        \node[vertex] (f3) at ($(f1)+(dy)$) {};
        \node[vertex] (f4) at ($(f2)+(dy)$) {};
        \draw[edge,colG] (f3)--(f4) (f3)--(f1) (f3)--(f2) (f3)--(t1) (f4)--(f2) (f4)--(t1);
        \draw[edge,colO] (s2)--(t2) (s3)--(t3);

        \node[vertex] (e1) at ($(dt7)+(40:1)$) {};
        \path[name path=C1] (e1) circle[radius=1cm];
        \path[name path=C2] (cl4) circle[radius=1cm];
        \path[name intersections={of=C1 and C2}];
        \node[vertex] (e2) at (intersection-1) {};
        \draw[edge,colR] (dt7)--(e1) (e1)--(e2) (e2)--(cl4);
        \node[hvertex] at (d7) {};
    \end{tikzpicture}
    \qquad
    \begin{tikzpicture}[scale=0.75,rotate=90]
        \coordinate (dx) at (0:1);
        \coordinate (dy) at (60:1);
        \coordinate (dyy) at (120:1);
        \node[vertex] (bt-1) at (dyy) {};
        \foreach \i [remember=\i as \io (initially -1)] in {0,1,2}
        {
            \node[vertex] (b\i) at ($\i*(dx)$) {};
            \node[vertex] (bt\i) at ($\i*(dx)+(dy)$) {};
            \draw[edge] (b\i)--(bt\i) (b\i)--(bt\io) (bt\i)--(bt\io);
            \ifnum\i>0
                \draw[edge] (b\i)--(b\io);
            \fi
        }
        \foreach \i [remember=\i as \io (initially 0)] in {3,4,...,8}
        {
            \node[vertex] (b\i) at ($-2*(dx)+\i*(dx)+2*(dy)$) {};
            \node[vertex] (bt\i) at ($-2*(dx)+\i*(dx)+3*(dy)$) {};
            \draw[edge] (b\i)--(bt\i);
            \ifnum\i>3
                \draw[edge] (bt\i)--(bt\io) (b\i)--(bt\io) (b\i)--(b\io);
            \fi
        }
        \draw[edge] (bt1)--(b3) (bt2)--(b3) (bt2)--(b4);
        \foreach \k [count=\i,evaluate=\k as \kp using int(\k+1)] in {-1,3,5}
        {
            \node[vertex] (s\i) at ($(bt\k)+(dy)$) {};
            \draw[edge] (s\i)--(bt\k) (s\i)--(bt\kp);
        }
        \foreach \i [remember=\i as \io (initially 0)] in {1,2,3}
        {
            \node[vertex] (cl\i) at ($(bt7)+\i*(dy)$) {};
            \node[vertex] (cr\i) at ($(bt8)+\i*(dy)$) {};
            \draw[edge] (cl\i)--(cr\i);
            \ifnum\i>1
                \draw[edge] (cl\i)--(cl\io) (cr\i)--(cr\io) (cl\i)--(cr\io);
            \fi
        }
        \node[vertex] (cl4) at ($(bt7)+4*(dy)$) {};
        \draw[edge] (cl4)--(cl3) (cl4)--(cr3);
        \draw[edge] (cl1)--(bt7) (cl1)--(bt8) (cr1)--(bt8);

        \foreach \i [remember=\i as \io (initially 0)] in {1,2,...,7}
        {
            \node[vertex] (d\i) at ($-5*(dx)+\i*(dx)+6*(dy)$) {};
            \node[vertex] (dt\i) at ($-5*(dx)+\i*(dx)+7*(dy)$) {};
            \draw[edge] (d\i)--(dt\i);
            \ifnum\i>1
                \draw[edge] (dt\i)--(dt\io) (d\i)--(dt\io) (d\i)--(d\io);
            \fi
        }
        \foreach \k [count=\i,evaluate=\k as \kp using int(\k+1)] in {1,4,6}
        {
            \node[vertex] (t\i) at ($(d\k)-(dyy)$) {};
            \draw[edge] (t\i)--(d\k) (t\i)--(d\kp);
        }
        \node[vertex] (p) at ($(cl1)+(dyy)$) {};
        \draw[edge] (p)--(cl1) (p)--(cl2);

        \node[vertex] (f1) at ($(s1)+(dyy)$) {};
        \node[vertex] (f2) at ($(s1)+(dy)$) {};
        \draw[edge,colG] (f1)--(f2) (f1)--(s1) (f2)--(s1);
        \node[vertex] (f3) at ($(f1)+(dyy)$) {};
        \node[vertex] (f4) at ($(f2)+(dyy)$) {};
        \draw[edge,colG] (f3)--(f4) (f3)--(f1) (f4)--(f1) (f3)--(t1) (f4)--(f2) (f4)--(t1);
        \draw[edge,colO] (s2)--(t2) (s3)--(t3);

        \node[vertex] (e1) at ($(dt7)+(dx)$) {};
        \node[vertex] (e2) at ($(e1)+(dx)$) {};
        \draw[edge,colX] (dt7)--(e1) (e1)--(e2) (e2)--(cl4);
    \end{tikzpicture}
    \caption{A graph with a flexible matchstick realisation (left) that is not a penny realisation since the two purple vertices are too close and a rigid penny realisation (right).}
    \label{fig:pennyRigidMatchFlex}
\end{figure}

There are also graphs which are globally \prigid\, and not \mrigid. For instance, the graph in \Cref{fig:pennyGlobRigMatchFlex} has a unique penny realisation but a flexible realisation that has matchstick but not penny properties.

\begin{figure}[ht]
    \centering
    \begin{tikzpicture}[scale=0.75,rotate=-90]
        \coordinate (dx) at (0:1);
        \coordinate (dy) at (60:1);
        \coordinate (dyy) at (120:1);
        \node[vertex] (bt-1) at (dyy) {};
        \foreach \i [remember=\i as \io (initially -1)] in {0,1,...,4}
        {
            \node[vertex] (b\i) at ($\i*(dx)$) {};
            \node[vertex] (bt\i) at ($\i*(dx)+(dy)$) {};
            \draw[edge] (b\i)--(bt\i) (b\i)--(bt\io) (bt\i)--(bt\io);
            \ifnum\i>0
                \draw[edge] (b\i)--(b\io);
            \fi
        }
        \foreach \i [remember=\i as \io (initially 0)] in {1,...,3}
        {
            \node[vertex] (rr\i) at ($(bt4)+\i*(dy)$) {};
            \node[vertex] (rl\i) at ($(bt3)+\i*(dy)$) {};
            \draw[edge] (rr\i)--(rl\i);
            \ifnum\i>1
                \draw[edge] (rr\i)--(rr\io) (rl\i)--(rl\io) (rl\i)--(rr\io);
            \fi
        }
        \draw[edge] (rr1)--(bt4) (rl1)--(bt3) (rl1)--(bt4);
        \foreach \i [count=\k, remember=\i as \io (initially 3)] in {4,...,8}
        {
            \node[vertex] (rr\i) at ($(rr3)+\k*(dyy)$) {};
            \node[vertex] (rl\i) at ($(rl3)+\k*(dyy)$) {};
            \draw[edge] (rr\i)--(rl\i) (rr\i)--(rr\io) (rl\i)--(rl\io) (rr\i)--(rl\io);
        }
        \foreach \i [remember=\i as \io (initially 0)] in {1,...,3}
        {
            \node[vertex] (c\i) at ($(rl7)-\i*(dx)$) {};
            \node[vertex] (ct\i) at ($(rl8)-\i*(dx)$) {};
            \draw[edge] (c\i)--(ct\i);
            \ifnum\i>1
                \draw[edge] (c\i)--(c\io) (c\i)--(ct\io) (ct\i)--(ct\io);
            \fi
        }
        \draw[edge] (c1)--(rl7) (ct1)--(rl8) (c1)--(rl8);
        \node[vertex] (c4) at ($(rl7)-4*(dx)$) {};
        \draw[edge] (c4)--(c3) (c4)--(ct3);
        \node[vertex] (x) at ($(rl3)-(dx)$) {};
        \draw[edge] (x)--(rl2) (x)--(rl3) (x)--(rl4);
        \foreach \i [remember=\i as \io (initially 0)] in {1,...,4}
        {
            \node[vertex] (m\i) at ($(b1)+4*(dyy)+\i*(dx)$) {};
            \node[vertex] (mt\i) at ($(b1)+5*(dyy)+\i*(dx)$) {};
            \draw[edge] (m\i)--(mt\i);
            \ifnum\i>1
                \draw[edge] (m\i)--(m\io) (mt\i)--(mt\io) (mt\i)--(m\io);
            \fi
        }
        \foreach \p/\q [count=\i] in {bt-1/bt0,bt1/bt2,mt1/mt2,mt3/mt4}
        {
            \node[vertex] (s\i) at ($(\p)+(dy)$) {};
            \draw[edge] (s\i)--(\p) (s\i)--(\q);
        }
        \foreach \p/\q [count=\i] in {m1/m2,m3/m4,c4/c3,c2/c1}
        {
            \node[vertex] (t\i) at ($(\p)-(dyy)$) {};
            \draw[edge] (t\i)--(\p) (t\i)--(\q);
            \draw[edge,colO] (s\i)--(t\i);
        }

        \foreach \i [remember=\i as \io (initially 0)] in {1,...,3}
        {
            \node[vertex] (n\i) at ($(t1)-\i*(dx)$) {};
            \node[vertex] (nt\i) at ($(m1)-\i*(dx)$) {};
            \draw[edge] (n\i)--(nt\i);
            \ifnum\i>1
                \draw[edge] (n\i)--(n\io) (nt\i)--(nt\io) (n\i)--(nt\io);
            \fi
        }
        \draw[edge] (n1)--(t1) (nt1)--(m1) (n1)--(m1)  (nt1)--(mt1);
        \foreach \i [count=\k,remember=\i as \io (initially 3)] in {4,5}
        {
            \node[vertex] (n\i) at ($(n3)-\k*(dy)$) {};
            \node[vertex] (nt\i) at ($(nt3)-\k*(dy)$) {};
            \draw[edge] (n\i)--(nt\i);
            \ifnum\i>1
                \draw[edge] (n\i)--(n\io) (nt\i)--(nt\io) (nt\i)--(n\io);
            \fi
        }
        \node[vertex] (n6) at ($(n4)-(dyy)$) {};
        \draw[edge] (n6)--(n4) (n6)--(n5);
        \node[vertex] (e1) at ($(n6)+(-40:1)$) {};
        \path[name path=C1] (e1) circle[radius=1cm];
        \path[name path=C2] (bt-1) circle[radius=1cm];
        \path[name intersections={of=C1 and C2}];
        \node[vertex] (e2) at (intersection-1) {};
        \draw[edge,colR] (n6)--(e1) (e1)--(e2) (e2)--(bt-1);
        \node[hvertex] at (x) {};
        \node[hvertex] at (m4) {};
    \end{tikzpicture}
    \qquad
    \begin{tikzpicture}[scale=0.75,rotate=-90]
        \coordinate (dx) at (0:1);
        \coordinate (dy) at (60:1);
        \coordinate (dyy) at (120:1);
        \node[vertex] (bt-1) at (dyy) {};
        \foreach \i [remember=\i as \io (initially -1)] in {0,1,...,4}
        {
            \node[vertex] (b\i) at ($\i*(dx)$) {};
            \node[vertex] (bt\i) at ($\i*(dx)+(dy)$) {};
            \draw[edge] (b\i)--(bt\i) (b\i)--(bt\io) (bt\i)--(bt\io);
            \ifnum\i>0
                \draw[edge] (b\i)--(b\io);
            \fi
        }
        \foreach \i [remember=\i as \io (initially 0)] in {1,...,3}
        {
            \node[vertex] (rr\i) at ($(bt4)+\i*(dy)$) {};
            \node[vertex] (rl\i) at ($(bt3)+\i*(dy)$) {};
            \draw[edge] (rr\i)--(rl\i);
            \ifnum\i>1
                \draw[edge] (rr\i)--(rr\io) (rl\i)--(rl\io) (rl\i)--(rr\io);
            \fi
        }
        \draw[edge] (rr1)--(bt4) (rl1)--(bt3) (rl1)--(bt4);
        \foreach \i [count=\k, remember=\i as \io (initially 3)] in {4,...,8}
        {
            \node[vertex] (rr\i) at ($(rr3)+\k*(dyy)$) {};
            \node[vertex] (rl\i) at ($(rl3)+\k*(dyy)$) {};
            \draw[edge] (rr\i)--(rl\i) (rr\i)--(rr\io) (rl\i)--(rl\io) (rr\i)--(rl\io);
        }
        \foreach \i [remember=\i as \io (initially 0)] in {1,...,3}
        {
            \node[vertex] (c\i) at ($(rl7)-\i*(dx)$) {};
            \node[vertex] (ct\i) at ($(rl8)-\i*(dx)$) {};
            \draw[edge] (c\i)--(ct\i);
            \ifnum\i>1
                \draw[edge] (c\i)--(c\io) (c\i)--(ct\io) (ct\i)--(ct\io);
            \fi
        }
        \draw[edge] (c1)--(rl7) (ct1)--(rl8) (c1)--(rl8);
        \node[vertex] (c4) at ($(rl7)-4*(dx)$) {};
        \draw[edge] (c4)--(c3) (c4)--(ct3);
        \node[vertex] (x) at ($(rl3)-(dx)$) {};
        \draw[edge] (x)--(rl2) (x)--(rl3) (x)--(rl4);
        \foreach \i [remember=\i as \io (initially 0)] in {1,...,4}
        {
            \node[vertex] (m\i) at ($(b1)+4*(dyy)-(dx)+\i*(dx)$) {};
            \node[vertex] (mt\i) at ($(b1)+5*(dyy)-(dx)+\i*(dx)$) {};
            \draw[edge] (m\i)--(mt\i);
            \ifnum\i>1
                \draw[edge] (m\i)--(m\io) (mt\i)--(mt\io) (mt\i)--(m\io);
            \fi
        }
        \foreach \p/\q [count=\i] in {bt-1/bt0,bt1/bt2,mt1/mt2,mt3/mt4}
        {
            \node[vertex] (s\i) at ($(\p)+(dy)$) {};
            \draw[edge] (s\i)--(\p) (s\i)--(\q);
        }
        \foreach \p/\q [count=\i] in {m1/m2,m3/m4,c4/c3,c2/c1}
        {
            \node[vertex] (t\i) at ($(\p)-(dyy)$) {};
            \draw[edge] (t\i)--(\p) (t\i)--(\q);
            \draw[edge,colO] (s\i)--(t\i);
        }

        \foreach \i [remember=\i as \io (initially 0)] in {1,...,3}
        {
            \node[vertex] (n\i) at ($(t1)-\i*(dx)$) {};
            \node[vertex] (nt\i) at ($(m1)-\i*(dx)$) {};
            \draw[edge] (n\i)--(nt\i);
            \ifnum\i>1
                \draw[edge] (n\i)--(n\io) (nt\i)--(nt\io) (n\i)--(nt\io);
            \fi
        }
        \draw[edge] (n1)--(t1) (nt1)--(m1) (n1)--(m1) (nt1)--(mt1);
        \foreach \i [count=\k,remember=\i as \io (initially 3)] in {4,5}
        {
            \node[vertex] (n\i) at ($(n3)-\k*(dy)$) {};
            \node[vertex] (nt\i) at ($(nt3)-\k*(dy)$) {};
            \draw[edge] (n\i)--(nt\i);
            \ifnum\i>1
                \draw[edge] (n\i)--(n\io) (nt\i)--(nt\io) (nt\i)--(n\io);
            \fi
        }
        \node[vertex] (n6) at ($(n4)-(dyy)$) {};
        \draw[edge] (n6)--(n4) (n6)--(n5);
        \node[vertex] (r1) at ($(n6)+(dx)$) {};
        \node[vertex] (r2) at ($(r1)+(dx)$) {};
        \draw[edge,colX] (n6)--(r1) (r1)--(r2) (r2)--(bt-1);
    \end{tikzpicture}
    \caption{A graph with a flexible matchstick realisation (left) that is not a penny realisation since the two purple vertices are too close and a unique rigid penny realisation (right).}
    \label{fig:pennyGlobRigMatchFlex}
\end{figure}

Finally, the graphs in \Cref{fig:pennyMatchRigid} are \urigid, \prigid\, and \mrigid\, but not globally \urigid\, or \mrigid\, since one of the vertices may be flipped to create a new realisation. As opposed to the graph on the left, the one on the right is globally \prigid: due to distance constraints on non-edges, flipping the purple vertex of the graph on the right does not yield a penny framework.

\begin{figure}[ht]
    \centering
    \begin{tikzpicture}[rotate=90,scale=0.75]
        \coordinate (dx) at (0:1);
        \coordinate (dy) at (60:1);
        \coordinate (dyy) at (120:1);
        \clip ($-1*(dy)+(-0.2,-0.2)$) rectangle ($3.5*(dx)+4*(dy)+(0.2,0.2)$);
        \node[vertex] (b0) at (0,0) {};
        \foreach \i [remember=\i as \io (initially 0)] in {1,2,...,4}
        {
            \node[vertex] (b\i) at ($\i*(dy)$) {};
            \node[vertex] (bt\i) at ($-1*(dx)+\i*(dy)$) {};
            \draw[edge] (b\i)--(bt\i) (b\i)--(b\io) (bt\i)--(b\io);
            \ifnum\i>1
                \draw[edge] (bt\i)--(bt\io);
            \fi
        }
        \node[vertex] (c0) at ($5*(dx)$) {};
        \foreach \i [remember=\i as \io (initially 0)] in {1,2,...,4}
        {
            \node[vertex] (c\i) at ($5*(dx)+\i*(dyy)$) {};
            \node[vertex] (ct\i) at ($5*(dx)+1*(dx)+\i*(dyy)$) {};
            \draw[edge] (c\i)--(ct\i) (c\i)--(c\io) (ct\i)--(c\io);
            \ifnum\i>1
                \draw[edge] (ct\i)--(ct\io);
            \fi
        }
        \node[vertex] (t) at ($(b3)+(dx)$) {};
        \draw[edge] (t)--(b3) (t)--(b4) (t)--(c3) (t)--(c4) (b4)--(c4);
        \foreach \i [remember=\i as \io (initially 0)] in {1,2,3,4}
        {
            \node[vertex] (a\i) at ($\i*(dx)$) {};
            \ifnum\i>1
                \draw[edge] (a\i)--(a\io);
            \fi
        }
        \draw[edge] (b0)--(a1) (a4)--(c0) (a1)--(b1) (a4)--(c1);
        \node[vertex] (tt) at ($(a2)-(dyy)$) {};
        \draw[edge] (tt)--(a2) (tt)--(a3);
        \begin{scope}[opacity=0.5]
            \node[vertex] (tt2) at ($(a2)+(dy)$) {};
            \draw[edge,dotted] (tt2)--(a2) (tt2)--(a3);
        \end{scope}
    \end{tikzpicture}
    \quad
    \begin{tikzpicture}[rotate=-90,scale=0.75]
        \coordinate (dx) at (0:1);
        \coordinate (dy) at (60:1);
        \coordinate (dyy) at (120:1);
        \clip ($-1*(dx)-(dy)+(-0.2,-0.2)$) rectangle ($3*(dx)+3*(dy)+(0.2,0.2)$);
        \node[vertex] (b0) at (0,0) {};
        \foreach \i [remember=\i as \io (initially 0)] in {1,2,...,3}
        {
            \node[vertex] (b\i) at ($\i*(dy)$) {};
            \node[vertex] (bt\i) at ($-1*(dx)+\i*(dy)$) {};
            \draw[edge] (b\i)--(bt\i) (b\i)--(b\io) (bt\i)--(b\io);
            \ifnum\i>1
                \draw[edge] (bt\i)--(bt\io);
            \fi
        }
        \node[vertex] (c0) at ($3*(dx)$) {};
        \foreach \i [remember=\i as \io (initially 0)] in {1,2,...,3}
        {
            \node[vertex] (c\i) at ($3*(dx)+\i*(dyy)$) {};
            \node[vertex] (ct\i) at ($3*(dx)+1*(dx)+\i*(dyy)$) {};
            \draw[edge] (c\i)--(ct\i) (c\i)--(c\io) (ct\i)--(c\io);
            \ifnum\i>1
                \draw[edge] (ct\i)--(ct\io);
            \fi
        }
        \draw[edge] (b2)--(c2);
        \node[vertex] (a1) at (dx) {};
        \node[vertex] (a2) at ($2*(dx)$) {};
        \draw[edge] (a1)--(a2) (a1)--(b0) (a1)--(b1) (a2)--(c0) (a2)--(c1);
        \node[hvertex] (tt) at ($(a1)-(dyy)$) {};
        \draw[edge] (tt)--(a1) (tt)--(a2);
        \begin{scope}[opacity=0.25]
            \node[vertex] (tt2) at ($(a1)+(dy)$) {};
            \draw[edge,dotted] (tt2)--(a1) (tt2)--(a2);
        \end{scope}
    \end{tikzpicture}
    \caption{Graphs which are \urigid/\mrigid, but not globally. Both are \prigid, but only the right one is globally \prigid.}
    \label{fig:pennyMatchRigid}
\end{figure}

We conclude the section by establishing analogous connections between matchstick rigidity and $2$-unit-distance rigidity properties. See \Cref{tab:comparisontab} (blue) for a summary. 

First, we note that a not \urigid\, graph may be \mrigid, and even globally \mrigid, as shown in \Cref{fig:matchRigUnitFlex}. This graph is not \urigid, since flipping the yellow triangle along the vertical edge and slightly rotating the blue subgraph so that we obtain a non-injective realisation gives a  flexible 2-unit-distance framework.
However, this realisation is not a matchstick realisation. Indeed, the only matchstick realisation of this graph is the one given in the figure (and so the graph is globally \mrigid). If we remove the purple vertices and edges, we obtain  additional (but finitely many) realisations by flipping the green subgraphs (and so the graph remains \mrigid, but is neither \urigid\, nor globally \mrigid).
    
\begin{figure}[ht]
    \centering    
    \begin{tikzpicture}[scale=0.75,rotate=-30]
        \coordinate (dx) at (0:1);
        \coordinate (dy) at (60:1);
        \coordinate (dyy) at (120:1);
        \node[vertex] (b1) at ($(dx)$) {};
        \node[vertex] (bt1) at ($(dx)+(dyy)$) {};
        \draw[edge] (b1)--(bt1);
        \foreach \j [evaluate=\j as \jp using int(\j+1),evaluate=\j as \jpp using int(\j+2)] in {1,3,...,11}
        {
            \foreach \i [
                count=\k,
                remember=\i as \io (initially \j),
                evaluate=\k as \kx using {int(mod(int((\j+1)/2),2)*\k)},
                evaluate=\k as \ky using {int(mod(int((\j+3)/2),2)*\k)},
                evaluate=\k as \kk using {int(mod(int((\j+1)/2),2))},
            ] in {\jp,\jpp}
            {
                \node[vertex] (b\i) at ($(b\j)+\kx*(dx)+\ky*(dy)$) {};
                \node[vertex] (bt\i) at ($(bt\j)+\kx*(dx)+\ky*(dy)$) {};
                \draw[edge] (b\i)--(bt\i) (b\i)--(b\io) (bt\i)--(bt\io);
                \ifnum\kk=0
                    \draw[edge] (b\i)--(bt\io);
                \else
                    \draw[edge] (b\io)--(bt\i);
                \fi
            }
        }
        \foreach \p [count=\i,evaluate=\p as \q using int(\p+1)] in {1,5}
        {
            \node[vertex] (s\i) at ($(bt\p)+(dy)$) {};
            \draw[edge] (s\i)--(bt\p) (s\i)--(bt\q);
        }
        \node[vertex] (s3) at ($(bt12)+(dyy)$) {};
        \draw[edge] (s3)--(bt12) (s3)--(bt13);
        
        \node[vertex] (c1) at ($3*(dy)+2*(dyy)$) {};
        \node[vertex] (ct1) at ($3*(dy)+3*(dyy)$) {};
        \draw[edge] (c1)--(ct1);
        \foreach \j [evaluate=\j as \jp using int(\j+1),evaluate=\j as \jpp using int(\j+2)] in {1,3,...,5}
        {
            \foreach \i [
                count=\k,
                remember=\i as \io (initially \j),
                evaluate=\k as \kx using {int(mod(int((\j+3)/2),2)*\k)},
                evaluate=\k as \ky using {int(mod(int((\j+1)/2),2)*\k)},
                evaluate=\k as \kk using {int(mod(int((\j+3)/2),2))},
            ] in {\jp,\jpp}
            {
                \node[vertex] (c\i) at ($(c\j)+\kx*(dx)+\ky*(dy)$) {};
                \node[vertex] (ct\i) at ($(ct\j)+\kx*(dx)+\ky*(dy)$) {};
                \draw[edge] (c\i)--(ct\i) (c\i)--(c\io) (ct\i)--(ct\io);
                \ifnum\kk=0
                    \draw[edge] (c\i)--(ct\io);
                \else
                    \draw[edge] (c\io)--(ct\i);
                \fi
            }
        }
        \foreach \j [
            evaluate=\j as \i using int(\j+1),
            evaluate=\j as \kx using {int(mod(\j,2))},
            evaluate=\j as \ky using {int(mod(\j+1,2))},
            remember=\i as \io (initially \j),
        ] in {7,8,9,10}
        {
            \node[vertex] (c\i) at ($(c\j)+\kx*(dx)+\ky*(dy)$) {};
            \node[vertex] (ct\i) at ($(ct\j)+\kx*(dx)+\ky*(dy)$) {};
            \draw[edge] (c\i)--(ct\i) (c\i)--(c\io) (ct\i)--(ct\io);
            \ifnum\kx=0
                \draw[edge] (c\i)--(ct\io);
            \else
                \draw[edge] (c\io)--(ct\i);
            \fi
        }
        \foreach \i [count=\k,remember=\i as \io (initially 11)] in {12,13}
        {
            \node[vertex] (c\i) at ($(c11)+\k*(dx)$) {};
            \node[vertex] (ct\i) at ($(ct11)+\k*(dx)$) {};
            \draw[edge] (c\i)--(ct\i) (c\i)--(c\io) (c\io)--(ct\i) (ct\i)--(ct\io);
        }
        \node[vertex] (cc1) at ($(c12)-(dyy)$) {};
        \draw[edge] (cc1)--(c12) (cc1)--(c13);
        \node[vertex] (cc2) at ($(c13)-(dyy)$) {};
        \draw[edge] (cc2)--(c13) (cc2)--(cc1);
        \node[vertex] (cc3) at ($(cc1)-(dyy)$) {};
        \draw[edge] (cc3)--(cc1) (cc3)--(cc2);
        \node[vertex] (cc4) at ($(cc1)-(dy)$) {};
        \draw[edge] (cc4)--(cc1) (cc4)--(cc3);
        \node[vertex] (cc5) at ($(cc3)-(dy)$) {};
        \draw[edge] (cc5)--(cc3) (cc5)--(cc4);
        \node[vertex] (cc6) at ($(cc4)-(dy)$) {};
        \draw[edge] (cc6)--(cc4) (cc6)--(cc5);
        \node[vertex] (cc7) at ($(cc4)-(dx)$) {};
        \draw[edge] (cc7)--(cc4) (cc7)--(cc6);

        \foreach \e [count=\k] in {c1,c5}
        {
            \foreach \i [remember=\i as \io] in {1,2,3}
            {
                \node[vertex] (m\k\i) at ($(s\k)+(dx)+(dy)+\i*(dyy)-(dyy)$) {};
                \node[vertex] (mt\k\i) at ($(s\k)+(dy)+\i*(dyy)-2*(dyy)$) {};
                \draw[edge,colG] (m\k\i)--(mt\k\i);
                \ifnum\i>1
                    \draw[edge,colG] (m\k\i)--(m\k\io) (mt\k\i)--(mt\k\io) (mt\k\i)--(m\k\io);
                \fi
            }
            \draw[edge,colG] (s\k)--(mt\k1) (s\k)--(mt\k2) (\e)--(mt\k3) (\e)--(m\k3);
        }
        \node[vertex] (e) at ($(cc7)-(dx)$) {};
        \foreach \i [remember=\i as \io] in {1,2,3}
        {
            \node[vertex] (m3\i) at ($(s3)-(dx)-(dy)+\i*(dyy)-(dyy)$) {};
            \node[vertex] (mt3\i) at ($(s3)-(dy)+\i*(dyy)-(dyy)$) {};
            \draw[edge,colB] (m3\i)--(mt3\i);
            \ifnum\i>1
                \draw[edge,colB] (m3\i)--(m3\io) (mt3\i)--(mt3\io) (mt3\i)--(m3\io);
            \fi
        }
        \draw[edge,colB] (s3)--(mt31) (s3)--(mt32) (e)--(mt33) (e)--(m33);

        \draw[edge,colO] (e)--(c10) (e)--(cc7) (c10)--(cc7);
        \foreach \i in {1,2}
        {
            \node[hvertex] (t) at ($(s\i)-(dx)$) {};
            \node[vertex] (u) at ($(t)-(dyy)$) {};
            \draw[edge,colP, densely dotted] (t)--(s\i) (t)--(u);
        }
    \end{tikzpicture}
    \caption{A globally \mrigid, not \urigid\, graph. Removing the purple vertices and edges yields a \mrigid, not \urigid\, nor globally \mrigid\, graph.}
    \label{fig:matchRigUnitFlex}
\end{figure}

Finally, it is easy to see that the graph shown in \Cref{fig:matchGlobRigUnitRig} is \urigid\, and globally \mrigid, but not globally \urigid.

\begin{figure}[ht]
    \centering
    \begin{tikzpicture}[scale=0.75]
        \coordinate (dx) at (0:1);
        \coordinate (dy) at (60:1);
        \foreach \i [remember=\i as \io (initially 0)] in {1,2,...,5}
        {
            \node[vertex] (b\i) at ($\i*(dx)$) {};
            \node[vertex] (bt\i) at ($\i*(dx)+(dy)$) {};
            \draw[edge] (b\i)--(bt\i);
            \ifnum\i>1
                \draw[edge] (b\i)--(b\io) (bt\i)--(bt\io) (bt\io)--(b\i);
            \fi
        }
        \begin{scope}
            \coordinate (ex) at (90:1);
            \coordinate (ey) at (150:1);
            \foreach \i [remember=\i as \io (initially 0)] in {0,1,...,3}
            {
                \node[vertex] (c\i) at ($(bt1)+\i*(ex)$) {};
                \node[vertex] (ct\i) at ($(bt1)+\i*(ex)+(ey)$) {};
                \draw[edge] (c\i)--(ct\i);
                \ifnum\i>0
                    \draw[edge] (c\i)--(c\io) (ct\i)--(ct\io) (ct\io)--(c\i);
                \fi
            }
        \end{scope}
        \begin{scope}
            \coordinate (fx) at ($(bt5)!0.2!(c3)-(bt5)$);
            \coordinate[rotate around={-60:(fx)}] (fys) at ($2*(fx)$);
            \coordinate (fy) at ($(fys)-(fx)$);
            \foreach \i [remember=\i as \io (initially 0)] in {-1,0,...,4}
            {
                \node[vertex] (d\i) at ($(bt5)+\i*(fx)+(fy)$) {};
                \node[vertex] (dt\i) at ($(bt5)+\i*(fx)+2*(fy)$) {};
                \draw[edge] (d\i)--(dt\i);
                \ifnum\i>-1
                    \draw[edge] (d\i)--(d\io) (dt\i)--(dt\io) (dt\io)--(d\i);
                \fi
            }
            \node[vertex] (d5) at ($(bt5)+5*(fx)+(fy)$) {};
            \draw[edge] (d5)--(d4) (d5)--(dt4);
            \draw[edge] (d4)--(c3) (d5)--(c3) (d-1)--(bt5) (d0)--(bt5);
        \end{scope}
        \node[hvertex] at (b1) {};
        \node[hvertex] at (ct3) {};
    \end{tikzpicture}
    \caption{A \urigid, globally \mrigid, not globally \urigid\, graph.}
    \label{fig:matchGlobRigUnitRig}
\end{figure}

\section{Families of rigid penny graphs}
\label{sec:thms}
In this section, we study two classes of graphs: outerplanar graphs and graphs with many 3- and 4-cycles. In both cases, we provide equivalent conditions to determine whether a graph is \prigid/\mrigid/\urigid.

\begin{lemma}
    \label{lem:pum-rigid-iff-all-rigid}
    A penny/matchstick/$2$-unit-distance graph $G$ is \prigid/\mrigid/\urigid\ if and only if
    every penny/matchstick/$2$-unit-distance realisation of $G$ is rigid.
\end{lemma}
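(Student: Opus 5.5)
The plan is to prove the two implications separately. Throughout, ``rigid'' for a penny/matchstick/$2$-unit-distance framework means that it has no flex through realisations of the same class. Let $S$ be the set of penny/matchstick/$2$-unit-distance realisations of $G$. Let $D\colon S\to\mathbb{R}^{\binom{V}{2}}$ be the map $\rho\mapsto(\|\rho(u)-\rho(v)\|^2)_{u,v\in V}$. Two realisations are congruent if and only if they have the same image under $D$, so the realisations modulo isometries are in bijection with $D(S)$.

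We use the following observation: membership in $S$ depends only on the congruence class of a realisation. For penny and unit-distance realisations this holds because the defining conditions (unit edges, distances $>1$ on non-edges, injectivity) are conditions on pairwise distances. For matchstick realisations it holds because isometries preserve crossings of segments.

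\emph{($\Rightarrow$)} Suppose some $\rho\in S$ has a flex $t\mapsto m_t$ inside $S$. Then $t\mapsto D(m_t)$ is continuous, and $D(m_t)\neq D(\rho)$ for $t>0$. Its image is therefore a connected subset of Euclidean space with more than one point, hence infinite. So $G$ has infinitely many realisations of the class modulo isometries, contradicting the definition of \prigid/\mrigid/\urigid.

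\emph{($\Leftarrow$)} Suppose $G$ has infinitely many realisations modulo isometries. We must produce a flexible member of $S$. We proceed in four steps.

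First, $S$ is a semialgebraic set. Unit edge lengths are polynomial equations; distances $>1$ on non-edges and injectivity are polynomial inequalities; non-crossing of segments is expressible by sign conditions on orientation determinants. By Tarski--Seidenberg, $D(S)$ is semialgebraic as well.

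Second, an infinite semialgebraic set has only finitely many isolated points, so $D(S)$ has a non-isolated point $d_0=D(\rho_0)$. Choose $d_k\in D(S)\setminus\{d_0\}$ with $d_k\to d_0$.

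Third, pick preimages $\rho_k$ with a fixed vertex translated to the origin. Their pairwise distances are bounded, since $d_k\to d_0$, so after passing to a subsequence $\rho_k\to\rho^*$. By continuity $D(\rho^*)=d_0\in D(S)$, so $\rho^*$ is congruent to $\rho_0$. By the observation above, $\rho^*\in S$.

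Fourth, $\rho^*$ now lies in the closure of the semialgebraic set $S\setminus D^{-1}(d_0)$. The curve selection lemma gives a continuous semialgebraic arc $t\mapsto m_t$ in $S$ with $m_0=\rho^*$ and $m_t\in S\setminus D^{-1}(d_0)$ for $t\in(0,1]$. Each $m_t$ has the same (unit) edge lengths and is not congruent to $\rho^*$, so this arc is a flex of $\rho^*$ within the class. Hence some realisation in $S$ is not rigid.

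The main obstacle is the ($\Leftarrow$) direction, specifically turning ``infinitely many congruence classes'' into a genuine continuous motion. Because $S$ is defined by strict inequalities it is not closed, so compactness alone does not suffice. The compactness step must be combined with the fact that membership in $S$ depends only on congruence classes (to put the limit $\rho^*$ back in $S$), and then with curve selection (to obtain an arc rather than a sequence).
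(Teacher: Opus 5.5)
\paragraph{The gap: your (\texorpdfstring{$\Rightarrow$}{=>}) direction.} You open by redefining ``rigid'' for a realisation in $S$ as ``no flex staying inside $S$''. That is not the notion in the statement. In the paper, rigidity of $(G,\rho)$ is the bar-joint notion of Section~2: there is no continuous path through \emph{all} realisations that are equivalent to $\rho$ but not congruent to it. The lemma is later used with exactly this meaning. In \Cref{prop:pennyouterplanar} it is combined with Asimow--Roth, and in \Cref{thm:allFaces3or4cycles} with \Cref{thm:GraseggerLegersky2024}. Both of these concern bar-joint rigidity.

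With the intended meaning, your (\texorpdfstring{$\Rightarrow$}{=>}) argument only handles flexes that already lie in $S$. It says nothing about a bar-joint flex of $\rho\in S$ that might leave $S$ immediately. The missing step is the whole content of the paper's proof of this direction: $S$ is relatively open among unit-distance realisations. So if $t\mapsto m_t$ is any flex of $\rho\in S$, then $m_t\in S$ for all $t\in[0,\varepsilon)$, for some $\varepsilon>0$. The three cases go as follows.
\begin{itemize}[noitemsep]
\item Penny: there are finitely many non-edges, and each strict inequality $\|m_t(u)-m_t(v)\|>1$ persists for small $t$.
\item $2$-unit-distance: injectivity persists for small $t$.
\item Matchstick: disjoint closed bars stay disjoint for small $t$. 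Two unit bars sharing a joint could only overlap if their other endpoints coincided, which injectivity rules out.
\end{itemize}
Restricting the flex to $[0,\varepsilon/2]$ and reparametrising gives a flex inside $S$, and your argument then finishes. You already note that $S$ is cut out by strict inequalities, but you use this only to say that $S$ is not closed. It has to be used here as well.

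\paragraph{Your (\texorpdfstring{$\Leftarrow$}{<=}) direction.} This direction is correct under either reading, since a flex inside $S$ is in particular a bar-joint flex. It takes a more explicit route than the paper. The paper simply invokes the fact that a framework has only finitely many non-congruent rigid frameworks equivalent to it, working in the full unit-edge configuration space. That fact itself rests on the same ingredients you use: finitely many components of a semialgebraic set, plus curve selection.

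Your version works inside $S$ instead. This costs you the compactness step and the congruence-invariance of $S$, which you need to put the limit $\rho^*$ back into $S$. In return you get a flex that stays within the class, which is slightly more than the lemma requires.
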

\begin{proof}
    Suppose that a $2$-unit-distance framework $(G, \rho)$ has a flex $t \mapsto m_t$, $t\in [0,1]$.
    Then $m_t$ is injective for all $t$ in some interval $[0,\varepsilon)$ and so $G$ is not \urigid.
    If $\rho$ is matchstick, then $(G,m_t)$ is planar for all $t$ in some interval $[0,\varepsilon)$ and so $G$ is not \mrigid.
    Finally, suppose that $\rho$ is a penny realisation.
    For $uv\notin E$, let $\varepsilon_{uv}$ be the smallest~$t$ such that $\|m_t(u)-m_t(v)\|=1$,
    or $\varepsilon_{uv}=1$ if no such $t$ exists.
    Since $m_0=\rho$ is a penny realisation, $\varepsilon_{uv}>0$ for all $uv\not\in E$.
    Since there are only finitely many non-edges, $\varepsilon=\min\sst{\varepsilon_{uv}}{uv\notin E}$ is positive
    and hence all $m_t$ for $t\in[0,\varepsilon)$ are penny realisations of $G$, i.e., $(G, \rho)$ is not \prigid.

    The other implication follows from the fact that
    a framework can have only finitely many non-congruent rigid frameworks equivalent to it.
\end{proof}

\begin{proposition}\label{prop:pennyouterplanar}
    Let $G=(V,E)$ be an outerplanar penny/matchstick/$2$-unit-distance graph with $|V|\geq3$.
    Then every penny/matchstick/2-unit-distance framework $(G,\rho)$ is independent.
    Furthermore,
    the following are equivalent:
    \begin{enumerate}[noitemsep]
        \item\label{it:pennyouterplanar:prigid} $G$ is \prigid/\mrigid/\urigid;
        \item\label{it:pennyouterplanar:gprigid} $G$ is globally \prigid/\mrigid/\urigid;
        \item\label{it:pennyouterplanar:maxout} $G$ is a maximal outerplanar graph;
        \item\label{it:pennyouterplanar:edgecount} $|E|=2|V|-3$;
        \item\label{it:pennyouterplanar:2tree} $G$ is a 2-tree.
    \end{enumerate}
\end{proposition}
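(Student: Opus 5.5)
The plan is to prove independence first, by induction on $|V|$, peeling off a vertex of degree at most $2$ (every outerplanar graph has one). Let $(G,\rho)$ be a unit-distance framework; it is injective by definition, and penny and matchstick frameworks are special cases by \Cref{rem:relationship}. Let $v$ have degree at most $2$.

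If $\deg v\le 1$, the row of the edge at $v$ is the only row with nonzero entries in the columns of $v$. So $R(G,\rho)$ is independent iff $R(G-v,\rho|)$ is. If $\deg v=2$ with neighbours $a,b$, the $v$-blocks of the two rows are the unit vectors $\rho(v)-\rho(a)$ and $\rho(v)-\rho(b)$. These are distinct by injectivity, so they are dependent only when they are opposite, i.e.\ when $\rho(v)$ is the midpoint of a segment $\rho(a)\rho(b)$ of length $2$. Otherwise the standard $0$-extension argument gives: $G$ is independent iff $G-v$ is. In particular this always works when $ab\in E$.

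The degenerate collinear case is the main obstacle. There, a dependence must use both rows with equal coefficients, and the resulting combination is exactly a row for the pair $ab$. Hence $R(G,\rho)$ is independent iff the framework of $G-v+ab$ (with $ab$ a bar of length $2$) is independent. The graph $G-v+ab$ is again outerplanar, since it is obtained by contracting an edge of the path $a v b$. However, it is no longer unit-distance.

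To make the induction close, I would strengthen the hypothesis. I would prove independence for injective frameworks of outerplanar graphs in which every bar is a straight chain of unit segments. More precisely, every edge $xy$ records a straight subdivided path of unit steps whose interior points are distinct from all joints and from one another.

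In this class I would check two things. First, a triangle $xyc$ of the graph cannot be collinear: a degenerate unit-chain triangle would force two chains to overlap, contradicting the distinctness of interior points. Second, the contraction step preserves the class, because the new bar $ab$ is itself a straight chain through $\rho(v)$.

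With these facts, the degree-$2$ step handles both the non-collinear case (a $0$-extension) and the collinear case (the reduction to $G-v+ab$). A degree-$2$ vertex $v$ whose neighbours are adjacent gives a non-degenerate triangle, so $v$ can be removed directly. This yields independence.

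For the equivalences I would argue as follows.
\begin{enumerate}[noitemsep]
    \item[] \ref{it:pennyouterplanar:gprigid}$\Rightarrow$\ref{it:pennyouterplanar:prigid} is trivial.
    \item[] \ref{it:pennyouterplanar:maxout}$\Leftrightarrow$\ref{it:pennyouterplanar:edgecount}$\Leftrightarrow$\ref{it:pennyouterplanar:2tree} is standard graph theory for outerplanar graphs: $|E|\le 2|V|-3$ with equality exactly for maximal outerplanar graphs, which are exactly the outerplanar $2$-trees.
    \item[] \ref{it:pennyouterplanar:2tree}$\Rightarrow$\ref{it:pennyouterplanar:gprigid} is \Cref{lem:d_sphere_unique_realisation}.
    \item[] \ref{it:pennyouterplanar:prigid}$\Rightarrow$\ref{it:pennyouterplanar:edgecount}: by \Cref{lem:pum-rigid-iff-all-rigid}, any penny/matchstick/unit-distance realisation $\rho$ is rigid. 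Since $\rho$ is independent, the rank of $R(G,\cdot)$ equals $|E|$, which is maximal. Hence $\rho$ is a regular point, and the rank is constant near $\rho$. By the Asimow--Roth theorem, rigidity at a regular point gives infinitesimal rigidity, so $|E|=\operatorname{rank}=2|V|-3$ (using $|V|\ge 3$ and the injectivity of $\rho$, so the joints are not all equal).
\end{enumerate}
Finally, in the direction \ref{it:pennyouterplanar:prigid}$\Rightarrow$\ref{it:pennyouterplanar:edgecount}, if $|E|<2|V|-3$ the regular-point argument instead produces a flex, contradicting rigidity.
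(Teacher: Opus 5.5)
Your proof is correct. The equivalence part is essentially the paper's: maximal rank gives a regular point, then Asimow--Roth with \Cref{lem:pum-rigid-iff-all-rigid}, and \Cref{lem:d_sphere_unique_realisation} for $2$-trees. The independence part, however, takes a genuinely different route.

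The paper decomposes along $2$-sums. A cut vertex, or in the $2$-connected case a chord of the Hamiltonian cycle, splits $(G,\rho)$ into two smaller outerplanar frameworks of the same type, glued at a joint or along a bar. Such gluing preserves independence. The only base case is a chordless cycle, which is independent because an injective unit-distance cycle cannot lie on a line. In that approach your troublesome degree-$2$ vertex sitting at the midpoint of a length-$2$ segment is simply part of a cycle, so no strengthening of the hypothesis is needed. The price is a gluing lemma that the paper uses without stating it, together with the observation that the pieces are induced subgraphs and hence remain penny/matchstick frameworks.

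Your vertex-peeling argument uses only the $0$-extension lemma and the exact contraction identity $r_{va}+r_{vb}\propto r_{ab}$. It also proves more: independence for all injective integer-length straight-chain frameworks of outerplanar graphs.

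If you write it up, restate the degree-$2$ step inside the chain class, since the $v$-blocks now have integer lengths $\ell_{va},\ell_{vb}$:
\begin{enumerate}[noitemsep]
    \item You must also exclude the same-direction case. There, the nearer neighbour would either coincide with the other one or be an interior point of its chain.
    \item A dependence satisfies $\lambda_{va}\ell_{va}=\lambda_{vb}\ell_{vb}$, rather than having equal coefficients.
    \item The only distinctness you actually use is that interior chain points avoid joints. This is exactly what the contraction step preserves.
\end{enumerate}
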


\begin{proof}
    We prove the result for penny graphs. The same proof applies to matchstick and 2-unit-distance graphs. We start by proving the independence part of the result by induction on $|V|$.
    The statement is clearly true for $|V|=3$, since the complete graph $K_3$ has only one realisation, and triangles are independent.
    Now suppose that all outerplanar penny graphs on at most $n-1$ vertices have only independent penny frameworks, and let $|V|=n$.
    Choose a penny framework $(G,\rho)$.
    If $G$ is not 2-connected,
    it has a cut vertex.
    It follows that $(G,\rho)$ is formed from gluing two outerplanar penny frameworks at a vertex, and thus $(G,\rho)$ is independent by the induction hypothesis.
    Suppose instead that $G$ is 2-connected.
    Then $G$ has a (unique) Hamiltonian cycle, any chord of which is a cut set.
    If $G$ has no chords then $(G,\rho)$ is independent as any framework of a cycle that is not restricted to a line is independent.
    If $G$ has a chord, then $G$ is formed from gluing two outerplanar penny frameworks at an edge,
    and hence $(G,\rho)$ is independent by the induction hypothesis.

    We now prove the second part of the statement.
    It is well-known that \ref{it:pennyouterplanar:maxout}, \ref{it:pennyouterplanar:edgecount} and \ref{it:pennyouterplanar:2tree} are equivalent.
    Let $(G,\rho)$ be a penny realisation.
    Since it is independent,
    it is also regular, i.e., the rank of $R(G,\rho)$ is the maximum possible among all $(G,\rho')$.
    By the main result of~\cite{AsimowRoth1978},
    $(G,\rho)$ is rigid if and only if $|E|=2|V|-3$.
    Hence, \ref{it:pennyouterplanar:edgecount} and \ref{it:pennyouterplanar:prigid} are equivalent
    by \Cref{lem:pum-rigid-iff-all-rigid}.
    It is immediate that \ref{it:pennyouterplanar:gprigid} implies \ref{it:pennyouterplanar:prigid}.
    Finally \ref{it:pennyouterplanar:2tree} implies \ref{it:pennyouterplanar:gprigid} by \Cref{lem:d_sphere_unique_realisation}.
\end{proof}

We conclude this section by considering special edge colourings
that characterise the existence of a flexible realisation in $\mathbb{R}^2$ for general graphs; 
see~\cite{GraseggerLegerskySchicho2019} for more details.
\begin{definition}
    Let $G=(V,E)$ be a graph.
    A colouring of edges $\delta\colon  E\rightarrow \{\text{\blue{}, \red{}}\}$
    is called a \emph{NAC-colouring},
    if it is surjective and for every cycle in~$G$,
    either all edges have the same colour, or
    there are at least two edges of each colour.
    A NAC-colouring is called \emph{Cartesian} if no pair of vertices is connected by a red path and blue path simultaneously.
\end{definition}

We connect NAC-colourings with rigidity properties in \Cref{thm:allFaces3or4cycles}. To do so, we use the notion of `TP-framework' and a result from \cite{GraseggerLegersky2024}.
\begin{definition}
    Let $G=(V,E)$ be a graph.
    Let $\asc{G}$ be the 2-dimensional abstract simplicial complex
    obtained from $V\cup E$ by adding $\{u,v\}$ whenever $u,v\in V$ lie in an induced 4-cycle
    and $\{u,v,w\}$ whenever $u,v,w\in V$ lie in a 3-cycle or an induced 4-cycle.
    Let $\rho$ be a realisation of $G$ such that every 3-cycle is mapped to a non-degenerate triangle and every 4-cycle is mapped to
    a non-degenerate parallelogram.\footnote{By ``degenerate'' we mean all joints being collinear.}
    If $\asc{G}$ is simply connected, the framework $(G, \rho)$ is called a \emph{TP-framework}.
\end{definition}

\begin{theorem}[{\cite[Thm~1.24]{GraseggerLegersky2024}}]
    \label{thm:GraseggerLegersky2024}
    Let $(G,\rho)$ be a TP-framework.
    The following statements are equivalent:
    \begin{enumerate}[noitemsep]
        \item $(G,\rho)$ is rigid,
        \item $(G,\rho)$ is infinitesimally rigid,
        \item $G$ has no Cartesian NAC-colouring.
    \end{enumerate}
\end{theorem}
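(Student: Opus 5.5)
The plan is to prove the cycle of implications (ii)$\Rightarrow$(i)$\Rightarrow$(iii)$\Rightarrow$(ii). The implication (ii)$\Rightarrow$(i) is the classical fact that infinitesimal rigidity implies rigidity \cite{AsimowRoth1978}; it holds for arbitrary, not necessarily generic, frameworks.

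For (i)$\Rightarrow$(iii) I argue by contraposition and build an explicit flex from a Cartesian NAC-colouring $\delta$. First I record the local structure of $\delta$. Every 3-cycle is monochromatic, because a NAC-colouring cannot give a cycle exactly one edge of some colour. A 4-cycle $u_1u_2u_3u_4$ is either monochromatic or has opposite edges of equal colour. Indeed, if $u_1u_2,u_2u_3$ were red and $u_3u_4,u_4u_1$ blue, then $u_1$ and $u_3$ would be joined by both a red path and a blue path, contradicting the Cartesian property.

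Next I define $m_t$ by fixing a base vertex $v_0$ with $m_t(v_0)=\rho(v_0)$. For any other $v$, I pick a path from $v_0$ to $v$ and sum the edge vectors $\rho(y)-\rho(x)$ along it, rotating the red ones by angle $t$ and leaving the blue ones unchanged. For this to be well defined, the rotated vector sum must vanish around every closed walk. Since $\asc{G}$ is simply connected, every closed walk is generated, up to backtracking, by the boundaries of triangles and of induced 4-cycles. The added diagonals of 4-cycles lie in 2-simplices, so they are handled together with those cycles. On a monochromatic triangle or 4-cycle the closing condition is preserved under a common rotation. On a parallelogram with opposite edges of equal colour, each opposite pair cancels separately. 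Hence $m_t$ is well defined, continuous, and preserves all edge lengths. It remains to see that $m_t$ is not congruent to $\rho$ for small $t>0$. Take a vertex $w$ incident to both a red edge $wa$ and a blue edge $wb$; such a vertex exists because $\delta$ is surjective and $G$ is connected. By Cartesianity $a\neq b$. The angle between $\rho(a)-\rho(w)$ and $\rho(b)-\rho(w)$ changes by $t$, so $\|m_t(a)-m_t(b)\|$ changes for small $t$, which gives a genuine flex.

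For (iii)$\Rightarrow$(ii), take an infinitesimal flex $q$ and assign to each edge $uv$ its angular velocity $\omega_{uv}=\det(\rho(u)-\rho(v),\,q(u)-q(v))/\|\rho(u)-\rho(v)\|^2$, so that $q(u)-q(v)=\omega_{uv}J(\rho(u)-\rho(v))$ with $J$ the rotation by $\pi/2$. I then check two facts. In a non-degenerate triangle all three $\omega$ coincide, since the differentiated closing condition forces this when the edge vectors are pairwise non-parallel. In a non-degenerate parallelogram, opposite edges have equal $\omega$, by the same argument applied to the two pairs of parallel sides. If $q$ is non-trivial, $\omega$ is non-constant; otherwise $q$ would be an infinitesimal rotation plus translation. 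So I choose a value $c$ taken by $\omega$ and colour red exactly the edges with $\omega_e=c$, and blue all others. Surjectivity is then clear.

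The main obstacle is proving that this colouring is a Cartesian NAC-colouring; this is the step I expect to be hardest. The approach I would try first is the following. For any walk from $x$ to $y$, $q(y)-q(x)=\sum_e\omega_e J(\rho\text{-edge vector})$. If a red path $P$ and a blue path $Q$ both join $x$ and $y$, then their $\omega$-weighted sums agree while their plain edge-vector sums also agree. Subtracting, I get $\sum_{e\in Q}(\omega_e-c)\,\vec e=0$ for the blue path, which must then be contradicted by a finer choice of $c$. The natural finer choice is to take $c$ extremal in a suitable direction, or to define the colouring through a generic linear functional of $\omega$. Proving that a cycle cannot have exactly one edge of one colour is easier. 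I would reduce along the simply connected complex: any cycle is a sum of triangle and parallelogram boundaries, and on these the colour pattern is locally consistent. If the direct argument fails, I would fall back on the ``$\triangle$P-class'' decomposition of \cite{GraseggerLegersky2024}. There, edge classes under the relations ``same triangle'' and ``opposite in a 4-cycle'' form a quotient, and the flex gives a non-constant function on that quotient from which a Cartesian NAC-colouring is read off.
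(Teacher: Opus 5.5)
\paragraph{There is a genuine gap: (iii)$\Rightarrow$(ii) is not proved.}
Your implications (ii)$\Rightarrow$(i) and (i)$\Rightarrow$(iii) are sound. For the latter, the only bookkeeping needed on the diagonals of $\asc{G}$ is that two $2$-paths with common endpoints form a $4$-cycle, hence a parallelogram whose opposite edges carry equal colours. The gap is (iii)$\Rightarrow$(ii), which is the real content of the theorem.

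\begin{itemize}
\item The relation you extract from the single infinitesimal flex, $\sum_{e\in Q}(\omega_e-c)\,\vec e=0$ along a blue path $Q$, is one vector equation, and nothing prevents it from holding. The blue edges may lie in several $\triangle$P-classes with different values of $\omega$. Even for extremal $c$, where all coefficients have one sign, the edge vectors of a path that doubles back admit a vanishing positive combination.
\item Your sketch of the NAC property does not globalise. The number of red edges on a closed walk is not preserved when the walk is pushed across a $2$-cell; across a red triangle it changes by one. So consistent colour patterns on triangles and parallelograms do not by themselves exclude a cycle with exactly one red edge.
\item The fallback to the $\triangle$P-quotient only restates what must be proved: that a non-constant function on $\triangle$P-classes yields a \emph{Cartesian} NAC-colouring.
\end{itemize}

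\paragraph{The missing idea: test the colouring with a finite flex, not with $q$.}
Use $q$ only to certify that $\omega^{-1}(c)$ is a union of $\triangle$P-classes, which your two local facts do show. This level-set colouring then satisfies exactly the local hypotheses of your own construction in (i)$\Rightarrow$(iii). So rotating all red edge vectors by $t$ and keeping the blue ones gives a well-defined realisation $m_t$, by simple connectivity of $\asc{G}$.

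\begin{itemize}
\item \emph{NAC.} A cycle with exactly one red edge $e$ forces $R_t\vec e=\vec e$ for all $t$. A cycle with exactly one blue edge forces the same for that edge. Both are impossible since $\vec e\neq 0$.
\item \emph{Cartesian.} A red path and a blue path from $x$ to $y$ give $R_t(\rho(y)-\rho(x))=\rho(y)-\rho(x)$, which yields only $\rho(x)=\rho(y)$. This is a contradiction when $\rho$ is injective, as for the penny, matchstick and unit-distance realisations to which the paper applies the theorem.
\end{itemize}

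However, TP-frameworks need not be injective. For example, take a fan of six equilateral triangles whose two extreme vertices coincide. So for the statement as given you still need an additional combinatorial argument. The paper itself gives no proof of this theorem; it imports the whole equivalence from \cite{GraseggerLegersky2024}.
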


\begin{theorem}
    \label{thm:allFaces3or4cycles}
    Let $G$ be a graph that has a penny/matchstick realisation $\rho$ such that all bounded faces are 3- or 4-cycles. 
    Then the following statements are equivalent:
    \begin{enumerate}[noitemsep]
        \item\label{it:pennyRigid} $G$ is \prigid/\mrigid,
        \item\label{it:2unitDistRigid} $G$ is \urigid,
        \item\label{it:contRigid} $(G,\rho)$ is rigid,
        \item\label{it:infRigid} $(G,\rho)$ is infinitesimally rigid,
        \item\label{it:noNAC} $G$ has no Cartesian NAC-colouring.
    \end{enumerate}
    In particular, if $G$ is a penny graph,
    then $G$ is \prigid\, if and only if it is \mrigid.
\end{theorem}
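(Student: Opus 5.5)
The plan is to show that every injective unit-distance realisation of $G$ is a TP-framework, and then apply \Cref{thm:GraseggerLegersky2024} together with \Cref{lem:pum-rigid-iff-all-rigid}.

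\textbf{Reduction to connected $G$.} If $G$ is disconnected, then colouring one component red and the rest blue is a Cartesian NAC-colouring, since no cycle or path meets both colours. In this case every realisation is flexible and none of \ref{it:pennyRigid}--\ref{it:noNAC} holds. So I assume $G$ is connected.

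\textbf{Step 1: $\asc{G}$ is simply connected.} Consider the plane drawing given by $\rho$.
\begin{itemize}[noitemsep]
\item A bounded 4-cycle face is a unit rhombus. It is non-degenerate because it bounds a face of a planar straight-line drawing.
\item Such a 4-cycle is induced. A chord would be a unit diagonal, which forces $60^\circ$ angles. Drawn straight, the chord would lie inside the face, which is impossible.
\item Filling every bounded face gives a 2-complex $X\subseteq\asc{G}$. For a connected plane graph, $X$ is the complement of the outer face, so it is simply connected.
\item The remaining simplices of $\asc{G}$ are further 3-cycles, induced 4-cycles and diagonal edges. Adding 2-simplices whose boundaries already lie in the complex cannot create $\pi_1$.
\item For a diagonal $\{u,v\}$ of an induced 4-cycle $uwvx$, the edge is added together with the triangles $\{u,v,w\}$ and $\{u,v,x\}$. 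The loop through the new edge is therefore homotopic to the path $u\,w\,v$, which is already in the complex.
\end{itemize}
Hence $\asc{G}$ is simply connected. This is a purely combinatorial fact about $G$, independent of any particular realisation.

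\textbf{Step 2: every injective unit-distance realisation $\rho'$ of $G$ is a TP-framework.}
\begin{itemize}[noitemsep]
\item Every 3-cycle maps to an equilateral triangle, which is non-degenerate.
\item Every 4-cycle $uvwx$ with unit edges has $\rho'(v)-\rho'(u)=\rho'(w)-\rho'(x)$ unless $\rho'(u)=\rho'(w)$ or $\rho'(v)=\rho'(x)$. Both coincidences are excluded by injectivity, so the 4-cycle is a parallelogram (a rhombus).
\item A collinear rhombus would force two of its vertices to coincide, again excluded by injectivity.
\end{itemize}
This step is where I expect the main care to be needed: checking the parallelogram and non-degeneracy conditions under mere injectivity, and matching the TP definition exactly, including 4-cycles that are not induced. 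Combining Steps 1 and 2, \Cref{thm:GraseggerLegersky2024} says each such $(G,\rho')$ is rigid if and only if it is infinitesimally rigid, if and only if $G$ has no Cartesian NAC-colouring. The last condition depends only on $G$.

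\textbf{Equivalences.}
\begin{itemize}[noitemsep]
\item Applying Step 2 to $\rho'=\rho$ gives \ref{it:contRigid}$\Leftrightarrow$\ref{it:infRigid}$\Leftrightarrow$\ref{it:noNAC}.
\item \ref{it:noNAC}$\Rightarrow$\ref{it:2unitDistRigid}: by Step 2 every $2$-unit-distance realisation is rigid, so $G$ is \urigid\ by \Cref{lem:pum-rigid-iff-all-rigid}.
\item \ref{it:2unitDistRigid}$\Rightarrow$\ref{it:pennyRigid}: by \Cref{rem:relationship}, penny and matchstick realisations are unit-distance realisations. Finitely many unit-distance realisations therefore give finitely many penny or matchstick ones.
\item \ref{it:pennyRigid}$\Rightarrow$\ref{it:contRigid}: this follows from \Cref{lem:pum-rigid-iff-all-rigid} applied to $\rho$ itself.
\end{itemize}

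\textbf{The final claim.} A penny realisation with the face property is also a matchstick realisation with the same faces. The theorem then applies in both senses to the same $\rho$, so \prigid\ and \mrigid\ are both equivalent to \ref{it:noNAC}.
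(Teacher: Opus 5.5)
Your proof follows the paper's route, and Steps~1 and~2 correctly fill in details the paper asserts in one line each. The route is: certify with the planar drawing $\rho$ that $\asc{G}$ is simply connected, which is a property of $G$ alone. Then check that every injective unit-distance realisation sends 3-cycles to equilateral triangles and 4-cycles to non-degenerate rhombi, and combine \Cref{thm:GraseggerLegersky2024} with \Cref{lem:pum-rigid-iff-all-rigid}.

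The one step that fails is your reduction to connected $G$. Colouring one component red and the rest blue is surjective only if both parts contain an edge. This breaks down when all edges lie in a single component.

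Take $G$ to be $K_3$ together with an isolated vertex, with the extra vertex placed far from the triangle. The only bounded face is a 3-cycle. $G$ has no NAC-colouring at all, since any surjective colouring of the triangle uses some colour exactly once on that cycle, so statement~\ref{it:noNAC} holds. Yet the isolated vertex moves freely, so statement~\ref{it:contRigid} fails.

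So the theorem is false as literally stated for such graphs, and connectivity of $G$ has to be assumed. The paper uses it tacitly when it asserts that $\asc{G}$ is simply connected; here $\asc{G}$ is not even connected. Rather than claiming to dispose of the disconnected case, add connectivity as a hypothesis. Your reduction is valid exactly when at least two components contain edges.
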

\begin{proof}
    The equivalence of \ref{it:contRigid}--\ref{it:noNAC} follows from \Cref{thm:GraseggerLegersky2024} once we show
    that $(G,\rho)$ is a TP-framework.
    Indeed, since $\rho$ is a penny/matchstick-realisation, it is injective and all 3-cycles are equilateral, hence non-degenerate, triangles;
    each 4-cycle is a rhombus with no coincident vertices, and so a non-degenerate parallelogram. 
    Since $\rho$ is planar and all bounded faces are triangles or
    rhombi by the assumption, $\asc{G}$ is simply connected. 

    By the argument, any penny/matchstick/$2$-unit-distance framework $(G,\rho)$ is a TP-framework.
    Since \ref{it:noNAC} is independent of $\rho$,
    either all penny/matchstick/$2$-unit-distance realisations of $G$ are rigid,
    or none of them is by \Cref{thm:GraseggerLegersky2024}.
    Therefore, \ref{it:pennyRigid}, \ref{it:2unitDistRigid} and \ref{it:noNAC} are equivalent
    by \Cref{lem:pum-rigid-iff-all-rigid}.
\end{proof}

The existence of Cartesian NAC-colourings
can be decided in polynomial time in the situation of \Cref{thm:allFaces3or4cycles} using results from~\cite{GraseggerLegersky2024}.

From now on colours in figures show NAC-colourings.
Without the assumption that the bounded faces are 3- or 4-cycles, \Cref{thm:allFaces3or4cycles} does not hold, as shown in \Cref{fig:pennyCartNAC} (right) and \Cref{fig:genFlexibleNoCartNAC}.

\begin{figure}[ht]
    \centering
    \begin{tikzpicture}[scale=0.75]
        \pgfmathparse{0}\let\w=\pgfmathresult
        \foreach \x in {1,2,3,4}
        {
            \node[vertex] (b\x) at ($(\x,0)+(90-\w:1)$) {};
            \node[vertex] (bp\x) at (\x,0) {};
        }
        \foreach \x [evaluate=\x as \nx using int(\x+1)] in {1,2,3}
        {
            \node[vertex,rotate around={60:(b\x)}] (c\x) at (b\nx) {};
            \node[vertex,rotate around={-60:(bp\x)}] (cp\x) at (bp\nx) {};
            \draw[edge,colR] (b\x)edge(b\nx);
            \draw[edge,colR] (b\x)edge(c\x);
            \draw[edge,colR] (b\nx)edge(c\x);
            \draw[edge,colR] (bp\x)edge(bp\nx);
            \draw[edge,colR] (bp\x)edge(cp\x);
            \draw[edge,colR] (bp\nx)edge(cp\x);
        }
        \draw[edge,colR] (c1)edge(c2) (c2)edge(c3) (cp1)edge(cp2) (cp2)edge(cp3);
        \draw[edge,colB] (b1)edge(bp1) (b2)edge(bp2) (b3)edge(bp3) (b4)edge(bp4);
    \end{tikzpicture}
    \qquad\qquad
    \begin{tikzpicture}[scale=0.75]
        \coordinate (dx) at (1,0);
        \coordinate (dy) at ($(0,{sqrt(3)/2})$);
        \node[vertex] (1) at ($0*(dx)+0*(dy)$) {};
        \node[vertex] (2) at ($1*(dx)+0*(dy)$) {};
        \node[vertex] (3) at ($2*(dx)+0*(dy)$) {};
        \node[vertex] (4) at ($3*(dx)+0*(dy)$) {};
        \node[vertex] (5) at ($4*(dx)+0*(dy)$) {};
        \node[vertex] (6) at ($5*(dx)+0*(dy)$) {};
        \node[vertex] (7) at ($-1/2*(dx)+1*(dy)$) {};
        \node[vertex] (8) at ($1/2*(dx)+1*(dy)$) {};
        \node[vertex] (9) at ($3/2*(dx)+1*(dy)$) {};
        \node[vertex] (10) at ($5/2*(dx)+1*(dy)$) {};
        \node[vertex] (11) at ($7/2*(dx)+1*(dy)$) {};
        \node[vertex] (12) at ($9/2*(dx)+1*(dy)$) {};
        \node[vertex] (13) at ($11/2*(dx)+1*(dy)$) {};
        \node[vertex] (14) at ($5*(dx)+2*(dy)$) {};
        \node[vertex] (15) at ($0*(dx)+2*(dy)$) {};
        \node[vertex] (16) at ($2*(dx)+2*(dy)$) {};
        \node[vertex] (17) at ($0*(dx)+4*(dy)$) {};
        \node[vertex] (18) at ($1*(dx)+4*(dy)$) {};
        \node[vertex] (19) at ($2*(dx)+4*(dy)$) {};
        \node[vertex] (20) at ($3*(dx)+4*(dy)$) {};
        \node[vertex] (21) at ($4*(dx)+4*(dy)$) {};
        \node[vertex] (22) at ($5*(dx)+4*(dy)$) {};
        \node[vertex] (23) at ($1/2*(dx)+5*(dy)$) {};
        \node[vertex] (24) at ($3/2*(dx)+5*(dy)$) {};
        \node[vertex] (25) at ($5/2*(dx)+5*(dy)$) {};
        \node[vertex] (26) at ($7/2*(dx)+5*(dy)$) {};
        \node[vertex] (27) at ($9/2*(dx)+5*(dy)$) {};
        \node[vertex] (28) at ($1/2*(dx)+3*(dy)$) {};
        \node[vertex] (29) at ($5/2*(dx)+3*(dy)$) {};
        \node[vertex] (30) at ($9/2*(dx)+3*(dy)$) {};
        \draw[edge,colB] (1)--(2) (1)--(7) (1)--(8) (2)--(3) (2)--(8) (2)--(9) (3)--(4) (3)--(9) (3)--(10) (4)--(5) (4)--(10) (4)--(11) (5)--(6) (5)--(11) (5)--(12) (6)--(12) (6)--(13) (7)--(8) (7)--(15) (8)--(9) (8)--(15) (9)--(10) (9)--(16) (10)--(11) (10)--(16) (11)--(12) (12)--(13) (12)--(14) (13)--(14);
        \draw[edge,colR] (14)--(30) (15)--(28) (16)--(29);
        \draw[edge,colB] (17)--(18) (17)--(23) (17)--(28) (18)--(19) (18)--(23) (18)--(24) (18)--(28) (19)--(20) (19)--(24) (19)--(25) (19)--(29) (20)--(21) (20)--(25) (20)--(26) (20)--(29) (21)--(22) (21)--(26) (21)--(27) (21)--(30) (22)--(27) (22)--(30) (23)--(24) (24)--(25) (25)--(26) (26)--(27);
    \end{tikzpicture}
    \caption{Two penny realisations of graphs with Cartesian NAC-colourings.
    Since all bounded faces of the left one are 3- or 4-cycles it is not \prigid\ by \Cref{thm:allFaces3or4cycles}. The right one, however, is \prigid.}
    \label{fig:pennyCartNAC}
\end{figure}

\begin{figure}[ht]
    \centering
    \begin{tikzpicture}[scale=0.75]
        \coordinate (dx) at (0:1);
        \coordinate (dy) at (60:1);
        \node[vertex] (bt0) at (dy) {};
        \foreach \i [remember=\i as \io (initially 0)] in {1,2,...,10}
        {
            \node[vertex] (b\i) at ($\i*(dx)$) {};
            \node[vertex] (bt\i) at ($\i*(dx)+(dy)$) {};
            \draw[edge,colB] (b\i)--(bt\i) (bt\i)--(bt\io) (bt\io)--(b\i);
            \ifnum\i>1
                \draw[edge,colB] (b\i)--(b\io);
            \fi
        }
        \foreach \x [count=\i,evaluate=\x as \xm using int(\x-1)] in {1,4,7,10}
        {
            \node[vertex] (c\i) at ($\x*(dx)-(dx)+2*(dy)$) {};
            \draw[edge,colB] (c\i)--(bt\x) (c\i)--(bt\xm);
        }
        \begin{scope}
            \foreach \w in {25}
            {
                \coordinate (dxm) at (-\w:1);
                \coordinate (dym) at (60-\w:1);
                \foreach \i [remember=\i as \io] in {0,1,...,4}
                {
                    \node[vertex] (m\i) at ($(c2)-\i*(dxm)+(dym)$) {};
                    \node[vertex] (mt\i) at ($(c2)-\i*(dxm)+2*(dym)$) {};
                    \draw[edge,colR] (m\i)--(mt\i);
                    \ifnum\i>0
                        \draw[edge,colR] (m\i)--(m\io) (mt\i)--(mt\io) (mt\i)--(m\io);
                    \fi
                }
                \draw[edge,colR] (c2)--(m0) (c2)--(m1);
                \node[vertex] (n2c1) at ($(c2)-(dxm)+3*(dym)$) {};
                \draw[edge,colR] (n2c1)--(mt0) (n2c1)--(mt1);
                \node[vertex] (mdc1) at ($(c2)-3*(dxm)$) {};
                \draw[edge,colR] (mdc1)--(m4) (mdc1)--(m3);
                \foreach \k in {3,4}
                {
                    \foreach \i [remember=\i as \io] in {1,2}
                    {
                        \node[vertex] (n\k\i) at ($(c\k)+\i*(dym)$) {};
                        \node[vertex] (nt\k\i) at ($(c\k)-(dxm)+\i*(dym)$) {};
                        \draw[edge,colR] (n\k\i)--(nt\k\i);
                        \ifnum\i>1
                            \draw[edge,colR] (n\k\i)--(n\k\io) (nt\k\i)--(nt\k\io) (nt\k\i)--(n\k\io);
                        \fi
                    }
                    \draw[edge,colR] (c\k)--(n\k1) (c\k)--(nt\k1);
                    \node[vertex] (n\k c1) at ($(c\k)-(dxm)+3*(dym)$) {};
                    \draw[edge,colR] (n\k c1)--(n\k2) (n\k c1)--(nt\k2);
                }
            }
        \end{scope}
        \begin{scope}
            \node[vertex] (db0) at ($(n4c1)+(dy)$) {};
            \foreach \i [remember=\i as \io (initially 0)] in {1,2,...,7}
            {
                \node[vertex] (d\i) at ($(n4c1)-\i*(dx)+2*(dy)$) {};
                \node[vertex] (db\i) at ($(n4c1)-\i*(dx)+(dy)$) {};
                \draw[edge,colB] (d\i)--(db\i) (db\i)--(db\io) (db\io)--(d\i);
                \ifnum\i>1
                    \draw[edge,colB] (d\i)--(d\io);
                \fi
            }
            \foreach \x [count=\i,evaluate=\i as \ii using int(5-\i),evaluate=\x as \xm using int(\x-1)] in {1,4,7}
            {
                \draw[edge,colB] (n\ii c1)--(db\x) (n\ii c1)--(db\xm);
            }
        \end{scope}
        \begin{scope}
            \path[name path=circ1] (c1) circle[radius=1cm];
            \path[name path=circ2] (mdc1) circle[radius=1cm];
            \path[name intersections={of=circ1 and circ2,name=p}];
            \node[vertex] (p1) at (p-1) {};
            \node[vertex] (p2) at (p-2) {};
            \draw[edge,colB] (c1)--(p1) (mdc1)--(p2);
            \draw[edge,colR] (c1)--(p2) (mdc1)--(p1);
        \end{scope}

        \begin{scope}
        \begin{pgfonlayer}{background}
            \pgfmathparse{0.3}\let\dd=\pgfmathresult
            \fill[black!10!white,rounded corners=2pt] 
                ($(b1)+(240:\dd)$) -- 
                ($(b4)+(-60:\dd)$) -- 
                ($(bt4)+(0:\dd)$) --                 
                ($(c2)+(-12.5:\dd)$) -- 
                ($(m0)+(-25:\dd)$) -- 
                ($(mt1)+(35:\dd)$) --
                ($(mt4)+(95:\dd)$) -- 
                ($(m4)+(155:\dd)$) -- 
                ($(mdc1)+(156.5:\dd)$) -- 
                ($(p2)+(167.5:\dd+0.05)$) -- 
                ($(c1)+(183.5:\dd)$) -- 
                ($(bt0)+(180:\dd)$) -- cycle;
        \end{pgfonlayer}
        \end{scope}
    \end{tikzpicture}
    \caption{Non-\prigid\, graphs with NAC-colourings, but no Cartesian one.
        The subgraph indicated by the gray area is generically flexible whilst the whole graph is generically rigid.}
    \label{fig:genFlexibleNoCartNAC}
\end{figure}

\phantomsection
\addcontentsline{toc}{section}{Acknowledgements}
\section*{Acknowledgements}
This research was funded in part by the Austrian Science Fund (FWF)
10.55776/I6233. For open access purposes, the authors have applied a CC BY public copyright license to any author accepted manuscript version arising from this submission.

This project originated from the Fields Institute Focus Program on Geometric Constraint Systems.
The authors are grateful to the Fields Institute for their hospitality and financial support.

S.\,D.\ was supported by the Heilbronn Institute for Mathematical Research.
G.\,G.\ was supported by the Austrian Science Fund (FWF) 10.55776/I6233.
J.\,L.\ was funded by the Czech Science Foundation (GAČR) project 22-04381L.
A.\,N.\ was partially supported by EPSRC grant number EP/X036723/1.

\phantomsection

\end{document}